\documentclass{amsart}
\usepackage[utf8]{inputenc}

\usepackage{xcolor}
\usepackage{amsmath}
\usepackage{amssymb}
\usepackage{comment}

\usepackage{geometry}
\allowdisplaybreaks

\title{$L^p$ approximation results for infinite dimensional Neural Networks}
\author{Luca Galimberti}
\address{Luca Galimberti \\ 
King's College London\\
Department of Mathematics\\
Strand Building WC2R 2LS, London, UK}
\email[]{luca.galimberti\@@kcl.ac.uk}

\newtheorem{theorem}{Theorem}[section]
\newtheorem{definition}[theorem]{Definition}
\newtheorem{lemma}[theorem]{Lemma}
\newtheorem{proposition}[theorem]{Proposition}

\newtheorem{remark}[theorem]{Remark}
\newtheorem{example}[theorem]{Example}

\newcommand{\R}{\mathbb R}
\newcommand{\C}{\mathbb C}
\newcommand{\N}{\mathbb N}

\newcommand{\norm}[1]{\left\lVert#1\right\rVert}
\newcommand{\abs}[1]{\left |#1\right|}
\newcommand{\X}{\mathfrak X}

\DeclareMathOperator{\Span}{span}

\keywords{Neural networks, global universal approximation, Fr\'echet space, quasi-Polish space}

\begin{document}

\begin{abstract}
Leveraging the neural architectures which we introduced in \cite{benth2023neural}, we show a global universal approximation theorem in the topology of $L^p(\mu)$, where $1\le p<\infty$ and $\mu$ is a Radon probability measure on a suitable infinite dimensional topological space $\X$. Namely, any function $f:\X\to \R$ in $L^p(\mu)$ can be approximated to any degree of accuracy by suitable infinite dimensional architectures. These architectures can be in turn approximated by almost classical neural networks which are specified by a finite number of parameters only. The vectorial case (where $f=f(x)\in E$ and $E$ is a Banach space) is also considered and analogous results are obtained.
\end{abstract}
\maketitle

\section{Introduction}

The classical universal approximation theorem asserts that every continuous function from $\mathbb{R}^n$ to $\mathbb{R}$ can be approximated arbitrarily well by a neural network with a single hidden layer. To state this more precisely, let $\sigma:\mathbb{R}\to\mathbb{R}$ be a fixed continuous function. Given parameters $a\in\mathbb{R}^n$ and $\ell,b\in\mathbb{R}$, define a {\em neuron} as the function $\mathcal{N}_{\ell,a,b}\in C(\mathbb{R}^n;\mathbb{R})$ given by
\[
x\longmapsto \ell\sigma(a^\top x+b).
\]
The universal approximation theorem identifies conditions on the {\em activation function} $\sigma$ under which the linear space generated by these neurons,
\[
\mathfrak N(\sigma):=\Span\{\mathcal{N}_{\ell,a,b};\ \ell,b\in\mathbb{R},\ a\in\mathbb{R}^n\},
\]
is dense in $C(\mathbb{R}^n;\mathbb{R})$ with respect to the topology of uniform convergence on compact sets. Equivalently, for every function $f\in C(\mathbb{R}^n;\mathbb{R})$, every compact set $K\subset\mathbb{R}^n$, and every $\varepsilon>0$, there exist $N\in\mathbb{N}$ together with parameters $\ell_i,b_i\in\mathbb{R}$ and $a_i\in\mathbb{R}^n$, $i=1,\ldots,N$, such that
\[
\sup_{x\in K}\left|f(x)-\sum_{i=1}^{N}\mathcal{N}_{\ell_i,a_i,b_i}(x)\right|\leq\varepsilon.
\]

One of the best-known sufficient conditions guaranteeing the density of $\mathfrak N(\sigma)$ was established independently by Cybenko \cite{Cybenko1989} and by Hornik, Stinchcombe, and White \cite{HORNIK1989359}. They showed that density holds whenever $\sigma$ is {\em sigmoid}, that is,
\[
\lim_{t\to\infty}\sigma(t)=1
\qquad\text{and}\qquad
\lim_{t\to-\infty}\sigma(t)=0.
\]
Subsequent work relaxed this assumption. Funahashi \cite{FUNAHASHI1989183} replaced the sigmoid requirement by a boundedness condition, while Leshno {\it et al.} \cite{LESHNO1993861} proved that it suffices for $\sigma$ to be non-polynomial. 

In \cite{hornik1991approximation} Hornik proved among other things a universal approximation theorem which is global (i.e. on the whole space) and holds with respect to the $L^p(\mu)$ topology, where $\mu$ is a finite measure on $\R^n$ and $1\le p<\infty$. In the present work, moving from this last result and the neural architectures which we have introduced in \cite{benth2023neural} and \cite{galimberti2024neural}, we prove several universal approximation theorems at once which are global and hold in the $L^p(\mu)$ sense for any Radon probability measure $\mu$ on $\X$. Here $\X$ (the underlying space) can be a Fr\'echet space or more generally a $\sigma$-compact quasi-Polish space (we will make this notion more precise later). Besides, we do not restrict ourselves to the scalar case only, but we will treat also the problem of approximating via infinite dimensional neural architectures vector-valued functions $f=f(x)\in E$, where $(E,\norm{\cdot}_E)$ is a Banach space. Finally, we address the issue of constructing suitable architectures able to process infinite dimensional input/output pairs but which are specified only by a finite number of parameters.

{\bf Related literature:} The approximation of nonlinear mappings between infinite dimensional spaces has a long history in approximation theory, functional analysis, and machine learning. Classical universal approximation theorems established that finite dimensional feedforward neural networks can approximate continuous functions on compact subsets of $\mathbb{R}^n$ \cite{Cybenko1989,hornik1991approximation,FUNAHASHI1989183}, motivating the extension of these results to infinite dimensional settings. The first instance thereof probably goes back to Sandberg \cite{76498} where in the context of discrete time systems, non-linear functionals on a space of functions from $\mathbb{N} \cup \{ 0\}$ to $\mathbb{N}$ are approximated with neural networks. In Chen and Chen \cite{286886, 392253} the authors consider the approximation of non-linear operators defined on infinite dimensional spaces and use these results for approximating the output of dynamical systems. Among other results they approximate functions $f: K\subset \X \to \R$, where $\X$ is Banach, $K$ is compact and $f$ is continuous. In Mhaskar and Hahm \cite{6795742} the authors derive networks that approximate the functionals on the function spaces $L^p ([-1,1]^s)$ for $1 \leq p < \infty $ and $C ([-1,1]^s)$ for integer $s\geq 1$. The common trait of all these works is that they showed that neural networks can approximate continuous operators under suitable compactness assumptions, providing the first rigorous foundation for learning infinite-dimensional input--output relationships in Hilbert or Banach spaces.

The emergence of scientific machine learning renewed interest in learning nonlinear operators directly from data consisting of pairs of input and output functions. Rather than approximating a single solution associated with one discretization, operator learning seeks to approximate the solution operator itself, enabling rapid evaluation for previously unseen inputs. The first widely adopted neural architecture explicitly designed for operator learning was the \emph{Deep Operator Network} (DeepONet) introduced by Lu \emph{et al.}~\cite{LiDeepONets}, and Lauthaler, Mishra and Karniadakis \cite{10.1093/imatrm/tnac001}. Inspired by branch--trunk decompositions and the universal approximation theorem for operators, DeepONet combines a branch network encoding sampled input functions with a trunk network representing evaluation locations. The accompanying theoretical analysis established a universal approximation theorem for continuous nonlinear operators acting on compact subsets of Banach spaces, thereby providing one of the first practically implementable neural architectures with rigorous approximation guarantees. Numerous variants have subsequently been proposed, including physics-informed DeepONets \cite{wang2021learning}. Another breakthrough followed with the introduction of the \emph{Fourier Neural Operator} (FNO) by Li \emph{et al.}~\cite{li2020fourier}. Instead of learning pointwise mappings, FNO parameterizes integral kernels through Fourier multipliers, producing architectures that are discretization invariant and capable of transferring across spatial resolutions. The resulting computational efficiency and scalability enabled the solution of challenging high-dimensional PDE families, making FNO one of the most influential operator-learning architectures. 

Parallel to algorithmic developments, a substantial theoretical literature has recently emerged. Universal approximation theorems have been established for broad classes of neural operator architectures, while approximation rates have been derived under assumptions on operator regularity, smoothness, and intrinsic dimension. For instance, in \cite{Hypertransformer} the authors have considered hyper-transformers architectures with the aid of which they have been able to approximate maps $f:K\subset\R^d\to (Y,\rho)$, whereas $K$ is a compact subset, $(Y,\rho)$ is a suitable metric space and $f$ is assumed to be $\alpha$-H\"older, $0<\alpha\le 1$. Always very recently, \cite{CuchieroSchmockerTeichmann} have studied approximation capabilities of neural networks defined on infinite dimensional weighted spaces, obtaining global universal approximation results for continuous functions whose growth is controlled by a weight function. Crucial step in their proof (as well as in previous classical results) is some version of Stone-Weierstrass theorem.

Infinitely wide neural networks, with an infinite but countable number of nodes in the hidden layer have been studied in the context of Bayesian learning, Gaussian processes and kernel methods by several authors, see e.g., Neal \cite{Neal}, Williams \cite{Williams}, Cho and Saul \cite{ChoSaul} and Hazan and Jaakola \cite{HJ}. Hornik \cite{Hornik-93} provides approximation results for such infinitely wide networks. Guss and Salakhutdinov \cite{Guss} prove the universal approximation property for two-layer infinite dimensional neural networks. They show their approximation property for continuous maps between spaces of continuous functions on compacts. Always very recently in \cite{ismailov2026shallow} feedforward neural networks with inputs from a topological space
(TFNNs) are studied, and a universal approximation theorem for shallow TFNNs  demonstrating their capacity to approximate any continuous function defined on this topological space is established. \cite{bilokopytov2026universal}, moving from the architectures we have introduced in \cite{benth2023neural}, proves that neural networks are universal approximators on suitable infinite dimensional spaces whenever the activation function is continuous and non-polynomial, a result which resembles the classical result in $\R^n$.

{\bf Outline} The outline for the paper is as follows. In Section~\ref{main:approx} we derive our first main result, Theorem~\ref{thm: main}, which shows that if $\sigma$ has the so-called {\em separating} property, which has been first introduced in \cite{benth2023neural}, then $\mathfrak N(\sigma)$, the space of neural architectures we are considering (see \eqref{eq: N sigma}), is dense in $L^p(\mu)$ for any $1\le p<\infty$ and any $\mu$ Radon probability measure on $\X$. In Section~\ref{sec: Finite-dimensional approximation} we face the issue of approximating these $L^p$ neural networks which are apriori specified by infinite-number of parameters via suitable architectures which are instead determined by a finite number of parameters only. This culminates in Proposition \ref{prop: finite dimensional approx, Banach} which requires the underlying Banach space $\X$ to be endowed with a Schauder basis in order to obtain neural architectures which are implementable into a machine. In Section \ref{sec: qp}  we largely extend the scope of these neural networks and prove an universal approximation theorem for a wide class of input spaces $\X$, namely the class of (sigma-compact) quasi-Polish spaces with quasi-Polish neural architectures, that have been introduced in \cite{galimberti2024neural}; similarly to Proposition \ref{prop: finite dimensional approx, Banach}, we show also for these neural architectures the possibility of them being implemented in a machine with finite computing power and memory. In Section \ref{sec: general codomain} we move from the problem of approximating functions from $\X$ to $\R$ to the question of approximating in the $L^p$ sense maps $f:\X\to E$ with a more general co-domain, namely we allow $E$ to be a Banach space and we will work in the framework of Bochner spaces $L^p(\mu;E)$. Finally, in Section \ref{sec: diffusion} we discuss an application from diffusion models and denoising diffusion operators, where the necessity of approximating in the $L^p$ sense maps $f$ with infinite dimensional domain and co-domain with suitable neural architectures appear.

\section{An abstract approximation result}\label{main:approx}

Let $\mathfrak X$ be a real Fr\'echet space, namely a locally convex metrizable complete space, and let $(p_k)_{k\in\N}$ be an increasing sequence of seminorms that generates the topology of $\mathfrak X$. We can then consider a metric $d$ on $\mathfrak X$ (that generates the same topology) given by
\begin{equation}\label{metric:loc:conv:space}
    d(x,y):= \sum_{k=1}^\infty 2^{-k}\frac{p_k(x-y)}{1+p_k(x-y)}, 
\end{equation}
for $x,y\in \mathfrak X$. The Borel sigma-algebra of $\X$ will be denoted as $\mathcal{B}(\X)$. We then fix a Radon probability measure $\mu$ on $\X$, namely $\mu$ is a Borel probability measure such that for every $B\in\mathcal B(\X)$ and $\varepsilon>0$, there exists a compact set $K_\varepsilon\subset B$ such that $\mu(B\setminus K_\varepsilon)<\varepsilon$: refer to e.g. \cite{bogachev2007measure}.

Let us consider $\sigma: \X \to \X$ an arbitrary function, which will be termed the \emph{activation function}. Let $A:\X\to \X$ be in $\mathcal{L}(\X)$, i.e. a linear and continuous operator, $b\in\X$ and $\ell\in \X'$, where $\X'$ denotes the topological dual of $\X$. Let us consider the following function:
\begin{equation}\label{eq: definition of 1 layer NN}
    \mathcal{N}_{\ell,A,b} : \X\to \mathbb R, \quad \mathcal{N}_{\ell,A,b}(x):= \langle \ell,\sigma(Ax+b)\rangle, \quad x\in\X,
\end{equation}
where $\langle\cdot,\cdot\rangle$ is the canonical pairing between $\X'$ and $\X$. We will call such function a {\it neuron} with parameters $\ell, A$ and $b$.
Observe that in contrast to \cite{benth2023neural} a neuron $\mathcal{N}_{\ell,A,b}$ will in general not be continuous, as we are not assuming that the activation function is continuous.

As customary, we define the space of the architectures based on the given activation function $\sigma$ as
\begin{equation}\label{eq: N sigma}
    \mathfrak N(\sigma) := \Span\{
    \mathcal{N}_{\ell,A,b}; \,\,\ell\in\X', A\in\mathcal{L}(\X),b\in\X    \},
\end{equation}
namely, we consider all linear combinations of the form
\begin{equation*}
    \sum_{j=1}^N \alpha_j \mathcal{N}_{\ell_j,A_j,b_j}, \quad \alpha_j\in\mathbb R,N\in\N.
\end{equation*}
The maps $\mathcal{N}_{\ell_1,A_1,b_1}, \dots , \mathcal{N}_{\ell_N,A_N,b_N}$ build a {\it hidden layer} with $N$ neurons.

As our aim is to work in an ``$L^p$ setting'' $(1\le p < \infty)$, we have to ensure first that $\mathfrak N(\sigma)$ is a subset of $L^p(\mu)$. We then impose a boundedness condition on the range $\sigma$. First, we recall that an arbitrary set $A\subset\mathfrak{X}$ is von Neumann-bounded if for any $k\in \N$ there exists $c_k>0$ such that $\sup_{x\in A}p_k(x)\leq c_k$. 
We then assume that the range of $\sigma$  
\begin{equation}
    \sigma(\X)\subset \X
\end{equation}
is von Neumann-bounded, which is consistent with the setting of the finite-dimensional version of the $L^p$ universal approximation theorem of \cite{hornik1991approximation}. Since $\ell$ is a continuous functional and in view of the von Neumann-boundedness assumption on $\sigma$, for any $\ell\in\X',A\in\mathcal L(\X),b\in\X$
\begin{equation*}
    \abs{\mathcal{N}_{\ell,A,b}(x)}\leq C_\ell \,p_{k_\ell} (\sigma(Ax+b)),\quad x\in \X
\end{equation*}
for some constant $C_\ell\geq 0$ and $k_\ell\in \N$ (compare Schaefer \cite[Thm. 1.1, p. 74]{schaefer1971topological}), and thus, for a constant $C(\ell,\sigma)$ depending on $\ell$ and $\sigma$ only
\begin{equation}\label{eq: neurons are bounded}
    \abs{\mathcal{N}_{\ell,A,b}(x)}\leq C(\ell,\sigma),\quad x\in \X.
\end{equation}
Namely, all the neurons based on the activation function $\sigma$ are real bounded functions, and therefore for any element
\begin{equation*}
    \sum_{j=1}^N \alpha_j \mathcal{N}_{\ell_j,A_j,b_j}, \quad \alpha_j\in\mathbb R,N\in\N
\end{equation*}
we will have
\begin{equation*}
    \abs{\sum_{j=1}^N \alpha_j \mathcal{N}_{\ell_j,A_j,b_j}(x)} \le C,\quad x\in\X
\end{equation*}
where the constant will clearly depend on $\alpha_j,\ell_j,j=1,\dots,N$ and $\sigma$. Since $\mu$ is a probability measure, it then follows
\begin{equation*}
    \mathfrak N(\sigma) \subset L^p(\mu), \quad 1\le p<\infty,
\end{equation*}
i.e. $\mathfrak N(\sigma)$ is a linear subspace of $L^p(\mu)$.

In order to show that $\mathfrak N(\sigma)$ is dense in $L^p(\mu)$ with respect to the $L^p(\mu)$ topology, we will impose the following technical condition, introduced in \cite{benth2023neural}, namely the so called separating property, which can be seen as the infinite-dimensional counterpart to the well known sigmoidal property for functions from $\mathbb{R}$ to $\mathbb{R}$ (see Cybenko \cite{Cybenko1989}):
\begin{definition}{Separating property:}\label{sigmoid}
There exist $\psi\in \X'\setminus\{0\}$ and $u_+,u_-,u_0\in\X$ such that either $u_+ \notin \Span \{u_0,u_-\}$ or $u_- \notin \Span \{u_0,u_+ \}$ and such that 
\begin{equation}\label{eq: abstract condition on sigma}
\begin{cases}
\lim_{\lambda\to\infty} \sigma(\lambda x) = u_+, \text{ if } x\in \Psi_+\\
\lim_{\lambda\to\infty} \sigma(\lambda x) = u_-, \text{ if } x\in \Psi_-\\
\lim_{\lambda\to\infty} \sigma(\lambda x) = u_0, \text{ if } x\in \Psi_0\\
\end{cases}
\end{equation}
where we have set 
\begin{equation*}
    \Psi_+ =\{ x\in\X; \langle\psi,x\rangle >0 \}, \quad \Psi_- =\{ x\in\X; \langle\psi,x\rangle <0 \}
\end{equation*}
and $\Psi_0=\ker(\psi)$.
\end{definition}
We point out that as a particular case of the Separating property we may choose $u_0=u_-=0$ and $u_+\neq 0$ for instance. Several examples of functions $\sigma$ that fulfill the Separating property are provided in \cite{benth2023neural}. Observe that by construction a function $\sigma:\X\to\X$ satisfying this property cannot be constant: compare again \cite{hornik1991approximation}.

The following first main result shows that $\mathfrak N(\sigma)$ is dense in $L^p(\mu),1\le p<\infty$ if the activation function $\sigma$ satisfies the separating property and it is von Neumann-bounded. The result is a combination of classical techniques from \cite{hornik1991approximation}, where a similar result has been shown for the case $\X = \mathbb{R}^n$, and more recent techniques from our paper \cite{benth2023neural}, where a universal approximation theorem on a Fr\'echet space $\X$ and with respect to the topology of uniform convergence on compacts has been proved.   

\begin{theorem}\label{thm: main}
    Let $\X$ be a real Fr\'echet space, and let $\sigma:\X\to\X$ be von Neumann-bounded and satisfying the separating property. Let $\mu$ be a Radon probability measure on $\X$, and let $1\le p <\infty$. Then $ \mathfrak N(\sigma)$ is dense in $L^p(\mu)$. In other words, given $f\in L^p(\mu)$, then for any $\varepsilon>0$, there exists $ \sum_{m=1}^M \alpha_m\mathcal{N}_{\ell_m,A_m,b_m}\in  \mathfrak N(\sigma)$ with suitable $\alpha_m\in\mathbb R, \ell_m\in\X',A_m\in\mathcal{L}(\X)$ and $b_m\in\X$ such that
\begin{equation*}\label{approx:prop}
    \norm{\sum_{m=1}^M \alpha_m\mathcal{N}_{\ell_m,A_m,b_m} - f}_{L^p(\mu)}^p=
    \int_\X \abs{
    \sum_{m=1}^M \alpha_m\mathcal{N}_{\ell_m,A_m,b_m}(x) - f(x) 
    }^p\mu(dx) <\varepsilon^p.
\end{equation*}
\end{theorem}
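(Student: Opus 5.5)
We argue by duality, following Hornik. Since $\mathfrak N(\sigma)$ is a linear subspace of $L^p(\mu)$, Hahn--Banach says it is dense if and only if every continuous functional on $L^p(\mu)$ that vanishes on $\mathfrak N(\sigma)$ is zero. For $1\le p<\infty$ and $\mu$ a probability measure, any such functional is represented by some $g\in L^q(\mu)$ with $1/p+1/q=1$ (for $p=1$, $g\in L^\infty$). So we fix $g\in L^q(\mu)$ with $\int_\X \mathcal N_{\ell,A,b}\, g\,d\mu=0$ for all $\ell,A,b$, and aim to show $g=0$ $\mu$-a.e.

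\textbf{Step 1: half-spaces.} Set $\nu(dx)=g(x)\mu(dx)$. This is a finite signed Borel measure, since $g\in L^q\subset L^1$, and it is Radon because $\mu$ is. Take $A=\phi\otimes w$, i.e. $Ax=\langle\phi,x\rangle w$ with $\phi\in\X'$, and $b=\theta w$, where $\langle\psi,w\rangle>0$. Then for $\lambda>0$,
\[
\sigma(\lambda Ax+\lambda b)=\sigma\bigl(\lambda(\langle\phi,x\rangle+\theta)w\bigr).
\]
By the separating property, $\lambda(\langle\phi,x\rangle+\theta)w$ tends (along the ray) to $u_+$, $u_-$ or $u_0$ according to the sign of $\langle\phi,x\rangle+\theta$. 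Hence $\langle\ell,\sigma(\cdot)\rangle$ converges pointwise to
\[
\langle\ell,u_+\rangle 1_{H_+}+\langle\ell,u_-\rangle 1_{H_-}+\langle\ell,u_0\rangle 1_{H_0},
\]
where $H_\pm,H_0$ are the sets where $\langle\phi,x\rangle+\theta$ is positive, negative or zero. The neurons are bounded uniformly in $\lambda$ by \eqref{eq: neurons are bounded}, so dominated convergence with respect to $|g|\mu$ gives
\[
\langle\ell,u_+\rangle\nu(H_+)+\langle\ell,u_-\rangle\nu(H_-)+\langle\ell,u_0\rangle\nu(H_0)=0\quad\text{for all }\ell\in\X'.
\]
Suppose $u_+\notin\Span\{u_0,u_-\}$. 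Hahn--Banach gives $\ell$ with $\ell(u_+)=1$ and $\ell(u_0)=\ell(u_-)=0$, so $\nu(\{\langle\phi,\cdot\rangle>-\theta\})=0$. The other case is symmetric, using $H_-$. This holds for all $\phi\in\X'$ and $\theta\in\R$.

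\textbf{Step 2: $\nu=0$.} The sets $\{\phi>c\}$ for varying $c$ generate the Borel sets of $\R$ under $\phi$. So the push-forward $\phi_\#\nu$ vanishes on all half-lines $(c,\infty)$, and hence $\phi_\#\nu=0$ on $\R$ (a finite signed measure determined by its distribution function). In particular
\[
\hat\nu(\phi)=\int e^{i\langle\phi,x\rangle}\,\nu(dx)=0\quad\text{for every }\phi\in\X'.
\]
It remains to show that the Fourier transform determines a Radon signed measure on a Fréchet space, i.e. that $\X'$ separates Radon measures. Write $\nu=\nu^+-\nu^-$. Then $\nu^\pm$ are finite Radon measures with equal characteristic functionals. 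Their finite-dimensional projections $(\phi_1,\dots,\phi_n)_\#\nu^\pm$ agree by the finite-dimensional uniqueness theorem, so $\nu^\pm$ agree on the cylindrical $\sigma$-algebra. By Radonness, two Radon measures agreeing on cylinder sets coincide. Concretely, on each compact $K$ the weak topology induced by $\X'$ coincides with the original one, since $\X'$ separates points by Hahn--Banach. Thus compact sets are cylindrically approximable, and one can use inner regularity together with the fact that Baire sets of the weak topology suffice (cf. Bogachev). Hence $\nu=0$, so $g=0$ $\mu$-a.e., and density follows.

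\textbf{Main obstacle.} Steps 0--1 are routine. The main obstacle is Step 2, the uniqueness of Radon measures from their characteristic functionals on a general, possibly nonseparable, Fréchet space. There the Borel and cylindrical $\sigma$-algebras may differ. I would handle this by proving that $\nu^+(K)=\nu^-(K)$ for each compact $K$ and then invoking inner regularity. The equality on compacts would come from approximating $1_K$ by decreasing continuous cylindrical functions $h(\phi_1,\dots,\phi_n)$. These exist by compactness: $K$ is closed in the weak topology, and for any point outside $K$ a finite family of functionals separates it. Monotone convergence, or a net argument using $\tau$-additivity of Radon measures, then gives the result.
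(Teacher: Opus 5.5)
Your proposal is correct and follows essentially the same route as the paper. The paper also takes an annihilating $g\in L^{p'}(\mu)$ via Hahn--Banach and Riesz, passes to the limit $\lambda\to\infty$ by dominated convergence, isolates $u_+$ with a functional $\ell$, reduces to $\nu(\{\gamma>s\})=0$ for all $\gamma\in\X'$ and $s\in\R$, and concludes by uniqueness of Radon measures with equal Fourier transforms. The differences are minor: your rank-one operator $x\mapsto\langle\phi,x\rangle w$ is precisely what solves $\gamma=\psi\circ A$ in the paper's rotation-of-hyperplanes lemma, and you sketch a proof of the Radon uniqueness result (Bogachev's Lemma 7.13.5) via $\tau$-additivity instead of citing it.
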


\begin{proof}

The proof will be carried out by contradiction. Namely, assume that $\text{cl}(\mathfrak N(\sigma)) \subsetneq L^p(\mu)$: clearly, $\text{cl}(\mathfrak N(\sigma))$ is clearly still a vector subspace.

We choose $v\in L^p(\mu)\setminus \text{cl}(\mathfrak N(\sigma))$. By the Hahn-Banach theorem we can find a linear and continuous functional $\phi: L^p(\mu)\to \R$ such that
\begin{equation*}
    \phi\big|_{\operatorname{cl}(\mathfrak N(\sigma))} = 0, \quad \phi(v) =1,
\end{equation*}
and so $\phi$ cannot be identically zero. The Riesz representation theorem for $L^p$ spaces (refer to e.g. \cite[p. 375]{conway2010}: observe that to handle the case $p=1$ we have available that $\mu$ is $\sigma$-finite (clearly)), there exists $g\in L^{p'}(\mu)\setminus \{0\}$ with $p'=\frac{p}{p-1}\in (1,\infty]$ such that
\[
\phi(u)  = \int_\X u(x) g(x)\,\mu(dx), \quad u\in L^p(\mu),
\]
and with
\[
\int_\X u(x) g(x)\,\mu(dx) = 0, \quad u\in \text{cl}(\mathfrak N(\sigma)).
\]
This implies in particular that  for any $\ell\in\X',A\in\mathcal L(\X),b\in\X$ and $\lambda>0$ it holds
\begin{equation*}
    \int_\X \langle \ell,\sigma(\lambda(Ax+b))\rangle\, g(x)\,\mu(dx) = 0.
\end{equation*}

Observe that, as $\lambda\to \infty$, pointwise in $x\in\X$,
\begin{equation*}
     \langle \ell,\sigma(\lambda (Ax+b))\rangle \to
     \begin{cases}
     \langle \ell, u_+\rangle, \text{ if } Ax+b \in \Psi_+\\
      \langle \ell, u_-\rangle, \text{ if } Ax+b \in \Psi_-\\
       \langle \ell, u_0\rangle, \text{ if } Ax+b \in \Psi_0,\\
     \end{cases}
\end{equation*}
and note that for this property to hold the continuity of $\sigma$ is not necessary.

By the boundedness of the neurons (compare \eqref{eq: neurons are bounded}), we can find a constant $ C(\ell,\sigma)$ which does not depend neither on $\lambda$ nor on $x\in \X$ such that
\begin{equation*}
    \abs{\langle \ell,\sigma(\lambda (Ax+b))\rangle} \leq C(\ell,\sigma),
\end{equation*}
and thus
\[
\abs{\langle \ell,\sigma(\lambda (Ax+b))\rangle g(x)} \leq C(\ell,\sigma) g(x) 
\]
which belongs to $L^1(\mu)$. Therefore, the Dominated Convergence Theorem, upon sending $\lambda\to\infty$, gives us
\begin{equation*}
    \begin{split}
    &\lim_{\lambda\to \infty}\int_\X \langle \ell,\sigma(\lambda(Ax+b))\rangle\, g(x)\,\mu(dx) \\
    &\quad=  \langle \ell, u_+\rangle \int_{A^{-1}(\Psi_+-b)} g(x)\,\mu(dx) +
    \langle \ell, u_-\rangle \int_{A^{-1}(\Psi_--b)} g(x)\,\mu(dx) \\
    &\quad\quad+
    \langle \ell, u_0\rangle \int_{A^{-1}(\Psi_0-b)} g(x)\,\mu(dx)
    \end{split}
\end{equation*}
for any $\ell\in\X',A\in\mathcal L(\X)$ and $b\in\X$.

Let us first assume that $u_+ \notin \Span \{u_0,u_-\}$. Then by the Hahn-Banach theorem for Fr\'echet spaces (see e.g. Conway \cite[Chap IV, Cor. 3.15]{conway2010}) we can choose $\ell\in\mathfrak X'$ such that $\langle \ell, u_+\rangle=1$ and  $\langle \ell, u_-\rangle=\langle \ell, u_0\rangle=0$. This gives us
\[
\int_{A^{-1}(\Psi_+-b)} g(x)\,\mu(dx) = 0
\]
for any $b\in\X$ and $A\in\mathcal L(\X)$. We fix now $s\in\R$ and $b\in\X$ such that $s=\psi(-b)$. Then, it holds that $$\Psi_+ -b=\psi^{-1}(s,\infty)$$ and thus $$\int_{A^{-1}\circ \psi^{-1}(s,\infty)} g(x)\,\mu(dx)=0$$
for each $s\in\R$ and $A\in\mathcal L(\X)$. By Lemma \ref{lemma: rotation of hyperplanes}, we obtain that for any linear and continuous functional $\gamma\in\X'$ and scalar $s\in\R$
\begin{equation}\label{eqn:right:interval}
\int_{\{\gamma>s\}} g(x)\,\mu(dx)=0.
\end{equation}
In the case $u_- \notin \Span \{u_0,u_+ \}$ instead, by another application of Hahn-Banach theorem we obtain now that
\begin{equation}\label{eqn:left:interval}
\int_{\{\gamma < s\}} g(x)\,\mu(dx)=0, \quad \gamma\in\X',s\in \R.
\end{equation}

We now define the non-zero finite signed Borel measure
\[  
\nu(B):=\int_Bg(x)\,\mu(dx),\quad B\in\mathcal B(\X).
\]
Clearly, it holds
\[
\nu_+ = g_+\mu,\quad \nu_-=g_-\mu,
\]
where $\nu_+$ and $\nu_-$ are the positive part and negative part respectively of the Hahn-Jordan decomposition of $\nu$, and $g_+$ and $g_-$ are respectively the positive and negative part of $g$. Observe, that given $\varepsilon>0$ and $B\in\mathcal{B}(\X)$, since $\mu$ is Radon, we may find $K_\varepsilon\subset B$ compact such that
\[
\mu(B\setminus K_\varepsilon) < \left[\frac{\varepsilon}{\norm{g_+}_{L^{p'}(\mu)} +  \norm{g_-}_{L^{p'}(\mu)}}\right]^p.
\]
Hence,
\[
\abs{\nu}(B\setminus K_\varepsilon) =\int_{B\setminus K_\varepsilon}g_+(x)\,\mu(dx) + \int_{B\setminus K_\varepsilon}g_-(x)\,\mu(dx) < \varepsilon
\]
where $\abs{\nu}$ is the total variation of $\nu$. Namely, we have showed that $\nu$ is Radon as well. 

From either \eqref{eqn:right:interval} or \eqref{eqn:left:interval} we deduce that
\[
\nu_+(\gamma\in V) = \nu_-(\gamma\in V)
\]
for any $\gamma\in \X'$ and $V\in\mathcal B(\R)$, and so
\[
\hat{\nu}_+ = \hat{\nu}_-,
\]
where $\hat{}$ denotes the Fourier transform of a measure: see Definition \ref{def: Fourier}. From Lemma \ref{lemma: Fourier}, it follows that $\nu_+$ and $\nu_+$ must coincide on $\Sigma(\X')$, the sigma-algebra generated by all linear and continuous functional on $\X$. However, always from Lemma \ref{lemma: Fourier}, we conclude that $\mu_+$ and $\mu_-$ coincide also on the larger sigma-algebra $\mathcal B(\X)$ because we have seen that they are both Radon. Thus, $\nu\equiv 0$, namely $g\equiv 0$, which is a contradiction. We conclude that the $\mathfrak N(\sigma)$ is dense in $L^p(\mu)$.

\end{proof}

\begin{remark}
    It is known that Radon probability measures on a Fr\'echet space $\X$ are concentrated on separable reflexive Banach spaces. Namely, for any $\mu$ Radon there exists a linear subspace $E\subset\X$ (which depends on $\mu$) such that $\mu(E)=1$ and $E$ can be endowed with a norm making it a separable reflexive Banach space: refer to \cite{bogachev2007measure}. Therefore, one in principle could work in the Banach space category and use neural architectures based on activation functions $\sigma:E\to E$. The drawback of this approach is that, since $E$ depends on the Radon measure $\mu$, we would lose some flexibility when choosing a different Radon measure $\mu'$, in comparison to the ``universal'' approach we are adopting here, i.e. $\sigma:\X\to\X$ independent of $\mu$.   
\end{remark}

\section{Finite-dimensional approximation}\label{sec: Finite-dimensional approximation}

Similarly in the spirit of \cite{benth2023neural}, we now face the issue of approximating the $L^p$ neural networks introduced in the previous section and which are apriori specified by an infinite-number of parameters, via suitable architectures which are instead determined by a finite number of parameters only. Evidently, this can only work if we can approximate any given $x\in \X$ sufficiently well with a finite dimensional quantity as otherwise we could not even represent $x$ in a computer. We therefore need spaces granting an ``approximation of the identity'' property. Therefore, similarly, to the ideas used in \cite{benth2023neural}, from now on we impose that the underlying space $\X$ is a separable Banach space with norm $\norm{\cdot}$ that admits a Schauder basis $\X$. This quite strong requirement is sufficient for the moment, and we will show in later sections how it can be removed in some cases.

More precisely, we assume that $\X$ admits a normalized Schauder basis $(e_k)_{k\in\N}$, namely each $x\in\X$ has a unique representation $x=\sum_{k=1}^\infty x_k e_k,x_k\in\R$ and $\norm{e_k}=1$ for all $k$.
It follows as in Schaefer \cite[Thm. 9.6, p. 115]{schaefer1971topological} that 
\begin{equation*}
    \Pi_N: \X \to \Span\{e_1,\dots ,e_N\},\quad x\mapsto \sum_{k=1}^N x_k e_k,\quad N\in\N
\end{equation*}
is linear and bounded with $\sup_{N\in\N}\norm{\Pi_N}_{op}\leq C$ for some suitable constant $C\geq 1$, and that for any $K\subset\X$ compact we have $\sup_{x\in K}\norm{x-\Pi_N x}\to 0$ as $N\to\infty$, and so in particular we have pointwise convergence of $\Pi_Nx$ to $x$ for any $x\in\X$. $\norm{\cdot}_{op}$ denotes the operator norm of bounded linear operator. We also introduce for convenience the canonical linear continuous projectors $\beta_k,k\in\N$ associated to the basis, namely
\[
\beta_k:\X\to\R,\quad x\mapsto x_k,\quad k\in\N.
\]
In this way, $x=\sum_{k=1}^\infty\langle\beta_k,x\rangle e_k,\,x\in\X$.

Since we are interested here in $L^p$ topology rather than the topology of uniform convergence on compacts, in contrast to \cite{benth2023neural},
we only need to assume that the activation function $\sigma:\X\to \X$ is continuous (but not necessarily Lipschitz). We also note that von Neumann-boundedness here simply means
\[
\sup_{x\in\X}\norm{\sigma(x)} < \infty,
\]
i.e. $\sigma$ is bounded. 

We then have:

\begin{proposition}\label{prop: finite dimensional approx, Banach}
Let $(\X,\norm{\cdot})$ be a real separable Banach space that admits a normalized Schauder basis $(e_k)_{k\in\N}$ with projections $\Pi_N,N\in\N$. Let $\sigma$ be continuous and bounded. Let $\mu$ be a probability measure (not necessarily Radon) on $\X$ and let $1\le  p <\infty$. Let $f\in L^p(\mu)$ and $\varepsilon>0$. Assume there exists $\mathcal N^\epsilon\in \mathfrak N(\sigma) \subset L^p(\mu)$
\begin{equation*}
    \mathcal N^{\epsilon} (x) = \sum_{j=1}^M\langle \ell_j,\sigma(A_jx+b_j)\rangle,\quad x\in\X
\end{equation*}
with suitable $M\in \N,\ell_j\in\X',A_j\in\mathcal{L}(\X)$ and $b_j\in\X$ such that 
\begin{equation*}
    \norm{f-\mathcal N^{\epsilon}}_{L^p{(\mu)}}<\varepsilon.
\end{equation*}
Fix $\delta>0$. Then there exists $N_\ast=N_\ast(\mathcal N^\epsilon,\delta)\in\N$ such that for $N\geq N_\ast$
\begin{equation}\label{approx:finite}
\norm{f-\sum_{j=1}^M\langle \ell_j\circ\Pi_N,\sigma(
    \Pi_{N}A_j\Pi_{N}\cdot+\Pi_{N}b_j)\rangle}_{L^p(\mu)} < \varepsilon+\delta.
\end{equation}
\end{proposition}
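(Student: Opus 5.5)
By the triangle inequality in $L^p(\mu)$ it suffices to prove that the finite-dimensional network
\[
\mathcal N^\epsilon_N(x):=\sum_{j=1}^M\langle \ell_j\circ\Pi_N,\sigma(\Pi_{N}A_j\Pi_{N}x+\Pi_{N}b_j)\rangle
\]
satisfies $\norm{\mathcal N^\epsilon-\mathcal N^\epsilon_N}_{L^p(\mu)}\to 0$ as $N\to\infty$. We then choose $N_\ast$ so that this quantity is below $\delta$ for all $N\ge N_\ast$, and (\ref{approx:finite}) follows from $\norm{f-\mathcal N^\epsilon}_{L^p(\mu)}<\varepsilon$. The tool will be the Dominated Convergence Theorem, which needs no Radon property of $\mu$. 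This is consistent with the statement, where $\mu$ is only a probability measure. Measurability of $\mathcal N^\epsilon_N$ is clear because it is continuous.

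\textbf{Pointwise convergence.} Fix $x\in\X$ and $j$. We have $\Pi_N x\to x$ pointwise, and $A_j$ is bounded, so $A_j\Pi_N x\to A_jx$. Next we estimate
\[
\norm{\Pi_NA_j\Pi_Nx-A_jx}\le \norm{\Pi_N}_{op}\norm{A_j}_{op}\norm{\Pi_Nx-x}+\norm{\Pi_NA_jx-A_jx}.
\]
Both terms tend to zero, using $\sup_N\norm{\Pi_N}_{op}\le C$ and pointwise convergence of $\Pi_N$. Similarly $\Pi_Nb_j\to b_j$, so $y_N:=\Pi_NA_j\Pi_Nx+\Pi_Nb_j\to A_jx+b_j$. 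By continuity of $\sigma$, $\sigma(y_N)\to\sigma(A_jx+b_j)$ in norm. It remains to pass this through the functional $\ell_j\circ\Pi_N$, which also varies with $N$. We write
\[
\langle \ell_j,\Pi_N\sigma(y_N)\rangle-\langle\ell_j,\sigma(A_jx+b_j)\rangle
=\langle \ell_j,\Pi_N(\sigma(y_N)-\sigma(A_jx+b_j))\rangle+\langle\ell_j,(\Pi_N-I)\sigma(A_jx+b_j)\rangle .
\]
The first term is bounded by $\norm{\ell_j}C\norm{\sigma(y_N)-\sigma(A_jx+b_j)}\to0$. The second tends to zero by pointwise convergence of $\Pi_N$ applied to the fixed vector $\sigma(A_jx+b_j)$. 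Hence $\mathcal N^\epsilon_N(x)\to\mathcal N^\epsilon(x)$ for every $x$.

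\textbf{Domination.} Since $\sigma$ is bounded, say $\sup\norm{\sigma}=S$, we get
\[
|\mathcal N^\epsilon_N(x)-\mathcal N^\epsilon(x)|\le \sum_{j}(C+1)\norm{\ell_j}S=:D<\infty
\]
uniformly in $N$ and $x$. Then $|\mathcal N^\epsilon_N-\mathcal N^\epsilon|^p\le D^p$, which is integrable since $\mu$ is a probability measure. Dominated convergence gives the $L^p$ convergence, and the conclusion follows.

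The only delicate point is the pointwise convergence step. Because both the argument of $\sigma$ and the outer functional depend on $N$, the argument needs the uniform bound $\sup_N\norm{\Pi_N}_{op}\le C$. Continuity of $\sigma$ is what makes pointwise (rather than uniform-on-compacts) convergence suffice, and this is why no Lipschitz assumption is needed.
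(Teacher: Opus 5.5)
Your proposal is correct and follows essentially the same route as the paper. Both arguments get pointwise convergence from $\sup_N\norm{\Pi_N}_{op}\le C$ and the continuity of $\sigma$, obtain an $N$-independent bound from the boundedness of $\sigma$, and then conclude with dominated convergence and the triangle inequality. The only cosmetic difference is that you dominate the difference $|\mathcal N^\epsilon_N-\mathcal N^\epsilon|$ directly, whereas the paper bounds $\mathcal N^\epsilon_N$ uniformly in $N$.
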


\begin{proof}
Fix $j=1,\dots, M$. By the pointwise convergence of the projections $\Pi_N$, we have that $\Pi_N A_j \Pi_Nx\to A_jx$ for each $x\in\X$ as $N\to\infty$, because
\[
\begin{split}
    \norm{\Pi_N A_j \Pi_Nx - A_jx} & \le \norm{\Pi_NA_j\Pi_Nx-\Pi_NA_jx} + \norm{\Pi_NA_jx-A_jx}\\
    & \le C \norm{A_j\Pi_Nx-A_jx} + \norm{\Pi_NA_jx-A_jx}\\
    & \le C \norm{A_j}_{op} \norm{\Pi_Nx-x} + \norm{\Pi_NA_jx-A_jx}\to 0
\end{split}
\]
because $\sup_{N\in\N}\norm{\Pi_N}_{op}\leq C<\infty$. Since the activation function is now assumed to be continuous, clearly
\[
\sigma(\Pi_N A_j \Pi_Nx+\Pi_Nb_j) \to \sigma(A_jx+b_j), \quad x\in \X
\]
as well. Besides, if we have an arbitrary sequence $(y_N)_N\subset\X$ such that $y_N\to y\in\X$, then
\[
\begin{split}
    \norm{\Pi_Ny_N-y} & \le \norm{\Pi_Ny_N-\Pi_Ny} + \norm{{\Pi_Ny-y}}\\
     & \le C\norm{ y_N-y} + \norm{{\Pi_Ny-y}} \to 0.
\end{split}
\]
Thus, by setting $y_N:=\sigma(\Pi_N A_j \Pi_Nx+\Pi_Nb_j)$ and $y:=\sigma(A_jx+b_j)$ in the previous equation, we infer
\[
\Pi_N\sigma(\Pi_N A_j \Pi_Nx+\Pi_Nb_j) \to \sigma(A_jx+b_j),\quad x\in\X
\]
and therefore 
\[
\langle\ell_j, \Pi_N\sigma(\Pi_N A_j \Pi_Nx+\Pi_Nb_j)\rangle \to \langle \ell_j,\sigma(A_jx+b_j)\rangle,\quad x\in\X,
\]
namely, as $N\to\infty$
\[
\sum_{j=1}^M\langle \ell_j\circ\Pi_N,\sigma(
    \Pi_{N}A_j\Pi_{N}x+\Pi_{N}b_j)\rangle \to \sum_{j=1}^M\langle \ell_j,\sigma(A_jx+b_j)\rangle
\]
pointwise in $x\in\X$.

Besides, for any $x\in\X$,
\[
\begin{split}
    \abs{\sum_{j=1}^M\langle \ell_j\circ\Pi_N,\sigma(
    \Pi_{N}A_j\Pi_{N}x+\Pi_{N}b_j)\rangle}&\le \sum_{j=1}^M \abs{\langle \ell_j\circ\Pi_N,\sigma(
    \Pi_{N}A_j\Pi_{N}x+\Pi_{N}b_j)\rangle}\\
    &\le \sum_{j=1}^M\norm{\ell_j}_\ast\norm{\Pi_N\sigma(
    \Pi_{N}A_j\Pi_{N}x+\Pi_{N}b_j)}\\
    &\le C\sum_{j=1}^M\norm{\ell_j}_\ast\norm{\sigma(
    \Pi_{N}A_j\Pi_{N}x+\Pi_{N}b_j)}\\
    &\le C(\sigma,\ell_1,\dots,\ell_M)
\end{split}
\]
where $C(\sigma,\ell_1,\dots,\ell_M)$ is a constant that does not depend on $N$. Here $\norm{\cdot}_\ast$ denotes the dual norm of $\X'$. By the Dominated Convergence Theorem, we have 
\[
\sum_{j=1}^M\langle \ell_j\circ\Pi_N,\sigma(
    \Pi_{N}A_j\Pi_{N}\cdot+\Pi_{N}b_j)\rangle \to 
    \sum_{j=1}^M\langle \ell_j,\sigma(A_j\cdot+b_j)\rangle\quad \text{in }  L^p(\mu).
\]
Then for any $\delta>0$ there exists $N_\ast=N_\ast(\mathcal N^\epsilon,\delta)\in\N$ such that for $N\geq N_\ast$
\[
\norm{\sum_{j=1}^M\langle \ell_j\circ\Pi_N,\sigma(
    \Pi_{N}A_j\Pi_{N}\cdot+\Pi_{N}b_j)\rangle -
    \sum_{j=1}^M\langle \ell_j,\sigma(A_j\cdot+b_j)\rangle}_{L^p(\mu)}<\delta
\]
and hence the thesis.
    
\end{proof}

The following result is evident:
\begin{proposition}\label{prop: finite dimensional approx, Banach 2}
    Let $(\X,\norm{\cdot})$ be a real separable Banach space that admits a normalized Schauder basis $(e_k)_{k\in\N}$ with projections $\Pi_N,N\in\N$. Let $\sigma$ be continuous, bounded and separating. Let $\mu$ be a Radon probability measure on $\X$ and let $1\le  p <\infty$. Then the space of neural architectures
    \[
    \left\{\sum_{j=1}^M\langle \ell_j\circ\Pi_N,\sigma(
    \Pi_{N}A_j\Pi_{N}\cdot+\Pi_{N}b_j)\rangle ;\; M,N\in\N,\ell_j\in\X',A_j\in\mathcal{L}(\X),b_j\in\X\right\}
    \]
    is dense in $L^p(\mu)$.
\end{proposition}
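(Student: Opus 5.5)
The plan is to combine Theorem \ref{thm: main} with Proposition \ref{prop: finite dimensional approx, Banach} in a two-step $\varepsilon/2$ argument. First I check that Theorem \ref{thm: main} applies. A separable Banach space is in particular a real Fr\'echet space, with the single seminorm $p_k=\norm{\cdot}$ for every $k$. In this setting von Neumann-boundedness of $\sigma$ is exactly $\sup_{x\in\X}\norm{\sigma(x)}<\infty$, which holds because $\sigma$ is bounded by hypothesis. The measure $\mu$ is a Radon probability measure, and $\sigma$ is separating. Hence Theorem \ref{thm: main} gives that $\mathfrak N(\sigma)$ is dense in $L^p(\mu)$.

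Now fix $f\in L^p(\mu)$ and $\eta>0$. By the density just established, there is $\mathcal N^{\eta/2}=\sum_{m=1}^M\alpha_m\mathcal N_{\ell_m,A_m,b_m}$ with $\norm{f-\mathcal N^{\eta/2}}_{L^p(\mu)}<\eta/2$. To put this in the form required by Proposition \ref{prop: finite dimensional approx, Banach}, I absorb the scalars into the functionals: $\alpha_m\langle\ell_m,\sigma(\cdot)\rangle=\langle\alpha_m\ell_m,\sigma(\cdot)\rangle$, and $\alpha_m\ell_m\in\X'$. So $\mathcal N^{\eta/2}(x)=\sum_{j=1}^M\langle\ell_j,\sigma(A_jx+b_j)\rangle$ with the renamed $\ell_j:=\alpha_j\ell_j$.

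Next I apply Proposition \ref{prop: finite dimensional approx, Banach} with $\varepsilon=\eta/2$ and $\delta=\eta/2$. Its hypotheses hold: $\sigma$ is continuous and bounded, and $\mu$ is a probability measure, so the Radon property is not even needed at this stage. It yields $N\ge N_\ast$ such that
\[
\norm{f-\sum_{j=1}^M\langle \ell_j\circ\Pi_N,\sigma(\Pi_{N}A_j\Pi_{N}\cdot+\Pi_{N}b_j)\rangle}_{L^p(\mu)}<\eta .
\]
The function inside the norm belongs to the displayed set of architectures. Since $\eta$ was arbitrary, that set is dense in $L^p(\mu)$.

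There is no genuine obstacle here. The only point needing care is the bookkeeping: making sure that the scalars $\alpha_m$ coming from the span in Theorem \ref{thm: main} can be absorbed into the $\ell_j$, so that the approximant matches the form assumed in Proposition \ref{prop: finite dimensional approx, Banach}.
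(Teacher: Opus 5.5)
Your proposal is correct and is exactly the argument the paper intends: the paper calls the result evident, and it follows by combining Theorem~\ref{thm: main} with Proposition~\ref{prop: finite dimensional approx, Banach} in an $\varepsilon/2$ argument, just as you do. Theorem~\ref{thm: main} applies because a Banach space is Fr\'echet and bounded means von Neumann-bounded, and absorbing the coefficients $\alpha_m$ into the functionals $\ell_m$ is the bookkeeping needed to match the form required by Proposition~\ref{prop: finite dimensional approx, Banach}.
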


Let us briefly comment on the last result. Despite the fact that the neural network 
\[
\X\ni x\mapsto \sum_{j=1}^M \langle
    \ell_j\circ\Pi_N,\sigma(\Pi_N A_j\Pi_Nx + \Pi_Nb_j)\rangle
\]
can process infinite-dimensional inputs $x\in\X$, its ``weights'' $\ell_j\circ\Pi_N,\Pi_N\circ A_j\circ\Pi_N$ and $\Pi_N\circ b_j$ are specified by a finite number of parameters. More precisely, the action of $\ell_j$ will be prescribed by the scalars $\ell_j(e_1),\dots,\ell_j(e_N)$, the action of $\Pi_N\circ A_J\circ\Pi_N$ will be specified by $\left\{ (A_je_m,e_k)_V \right\}_{m,k=1}^N$, and $\Pi_Nb_j$ is specified by an $N$ dimensional vector. This means that the neural network can be implemented in a machine possessing only finite memory and finite computing power. Its architecture is similar to the architecture of a classical 1-layer feed forward neural network: however, in the present setting, the function $\Pi_N \circ \sigma$ restricted to $\Span\{e_1,\dots ,e_N\}$ is multidimensional.

\section{Approximation results for more general domain}\label{sec: qp}
In this section, we leverage the results from the previous sections, and we will extend the scope of the neural architectures introduced above, by proving a more general universal approximation theorem in the $L^p$ sense. 

The classes of input spaces and neural networks we are going to use have been introduced in \cite{galimberti2024neural}: these spaces are called quasi-Polish spaces. We briefly recall here the main properties we will need: for more properties on quasi-Polish spaces and plenty of examples thereof, we refer again to \cite{galimberti2024neural}.

Let us consider the separable Hilbert space of square integrable sequences $V=\ell^2(\N)$. Define
\[
\mathcal{Q}=\{ a\in V; \, 0\leq a_i\leq 1/i, \,i\in\N
\}.
\]
It can be easily shown that $\mathcal{Q}$ is a compact subset of $V$.

\begin{definition}
    A topological space $(\X,\tau)$ is called quasi-Polish if there exists  a countable family $\{h_i:\X\to [0,1/i]\}_{i=1}^\infty$ of $\tau$-continuous functions which separate points of $\X$, namely for each $x_1,x_2\in\X$ with $x_1\neq x_2$ there exists $i\in \N$ such that
\[
h_i(x_1) \neq h_i(x_2).
\]
Such a family will be called a separating sequence.
\end{definition}

The assumption is very simple, and it is satisfied by a huge class of topological spaces. For instance all Polish spaces fall into this category, i.e. Polish spaces are quasi-Polish. It immediately gives rise to the following consequences:
\begin{enumerate}
    \item The induced map
    \[
    \X \ni x \stackrel{H}{\longmapsto} H(x) = (h_1(x),h_2(x),\dots) \in  \mathcal{Q} \subset V 
    \]
    is 1-1 and continuous, but in general it is not a homeomorphism of $\X$ onto a subspace of $ \mathcal Q $, i.e. in general it fails to be an embedding.
    \item $H$ defines another topology $\tau_H$ on $\X$ which is weaker than $\tau$, i.e. $\tau \supset \tau_H$. This last topology is metrizable though, and hence both $\tau_H$ and $\tau$ are Hausdorff.
    \item By the well-known minimal property of compact topologies, both topologies coincide on $\tau$-compact sets $\mathcal{K}\subset\X$, and hence $\tau$-compact sets are metrizable.
    \item For any $\mathcal{K}\subset \X$ $\tau$-compact
    \[
        H\big |_{\mathcal{K}}:\mathcal{K}\to H(\mathcal{K})
    \]
    is a homeomorphism. Therefore, $H(\mathcal{K})$ is compact in $\mathcal Q$. 
    Besides, $\mathcal{K}$ is compact if and only if it is sequentially compact.
    \item For all $E\subset\X$ $\sigma$-compact subspaces of $(\X,\tau)$
    \[
        H\big |_{E}:E\to H(E)
    \]
    is a measurable isomorphism.
\end{enumerate}

This last property will be very important for our goals, because by means of a change of variables we will be able to ``transport'' the results from Section \ref{main:approx} in the present setting. So, in the rest of the section, we are going to assume that $(\X,\tau)$ is quasi-Polish with some separating sequence $(h_i)_{i=1}^\infty$ and that $(\X,\tau)$ is $\sigma$-compact.

Before proving our next result, we need some preliminary observations: first of all, being $\X$ $\sigma$-compact, it can be written as
\[
\X =\bigcup_{n=1}^\infty K_n,\quad K_n \text{ compact}
\]
and so $H(\X)=\bigcup_{n=1}^\infty H(K_n)$ is also $\sigma$-compact by continuity of $H$, and therefore $H(\X)\in \mathcal B(V)$. Besides, $\mathcal{B}(H(X))=H(X)\Cap \mathcal{B(V)}:=\{H(\X)\cap B;B\in \mathcal B(V)\}$. Suppose $\mu$ is a Radon probability measure on $\X$. Then the pushforward probability measure
\[
\mu\circ H^{-1} :\mathcal{B}(V) \to [0,1]
\]
is also Radon: indeed, take an arbitrary $B\in\mathcal B(V)$ and $\varepsilon>0$. Evidently, without loss of generality, we can assume $B\subset H(\X)$. Set $B':=H^{-1}(B)\in\mathcal{B}(\X)$. Then there exists $K'_\varepsilon\subset B'$ compact such that $\mu(B'\setminus K'_\varepsilon)<\varepsilon$. Observe that $H(K'_\varepsilon)$ is compact, as $H$ is continuous, and trivially $H(K'_\varepsilon)\subset B$. Therefore,
\[
\mu\circ H^{-1}(B\setminus H(K'_\varepsilon)) = \mu(H^{-1}(H(B')\setminus H(K'_\varepsilon))) = \mu(B'\setminus K'_\varepsilon)<\varepsilon,
\]
and hence the claim. 

Moreover, given a function $g$ in $L^p(\mu),1\le p<\infty$, upon extending $g\circ H^{-1}$ to 0 outside $H(\X)$, we end up with a Borel measurable function on $V$ such that
\[
\begin{split}
    \int_V \abs{g\circ H^{-1}(v)}^p\,\mu\circ H^{-1}(dv) &= \int_{H(\X)} \abs{g\circ H^{-1}(v)}^p\,\mu\circ H^{-1}(dv)\\
    &=\int_\X \abs{(g\circ H^{-1})\circ H(x)}^p\,\mu  (dx)\\
    & =  \int_\X \abs{g(x)}^p\,\mu  (dx)
\end{split}
\]
by a change of variable, i.e. $g\circ H^{-1}\in L^p(\mu\circ H^{-1})$: in the sequel, we will always tacitly use this extension.

Let us give an important example of quasi-Polish spaces which are $\sigma$-compact.
\begin{example}\label{ex: reflexive separable Banach}
    Let $(\X,\norm{\cdot})$ be a separable reflexive Banach space. Let us endow $\X$ with its weak topology $\tau(\X,\X')$, i.e. the coarsest topology on $\X$ which makes all the elements of $\X'$ continuous. Then $(\X,\tau(\X,\X'))$ is quasi-Polish, as showed in \cite[Example 2.5]{galimberti2024neural}. Besides, since $\X$ as a Banach space is reflexive, by Kakutani's Theorem, we immediately deduce that $(\X,\tau(\X,\X'))$ is $\sigma$-compact. Besides, the following identity also holds
    \[
    \mathcal{B}(\tau(\X,\X')) = \Sigma(\X')=\mathcal{B}(\X)
    \]
    where $\Sigma(\X')$ is the $\sigma$-algebra generated by the element of $\X'$ and $\mathcal{B}(\X)$ is the Borel $\sigma$-algebra generated by the strong topology. In other words, even though $(\X,\norm{\cdot})$ and $(\X,\tau(\X,\X'))$ are topologically different, they are equivalent as measurable spaces.
    
\end{example}

We are ready to state and prove the main result of this section:

\begin{proposition}\label{prop: qp approx}
    Let $(\X,\tau)$ be a quasi-Polish space with induced map $H$, and assume that $\X$ is $\sigma$-compact. Let $1\le p <\infty$ and $\mu$ a Radon probability measure on $\X$. Assume to have an activation function $\sigma: V\to V$ with bounded range and satisfying the separating property. Let $f\in L^p(\mu)$ and $\varepsilon>0$. Then there exists $ \sum_{m=1}^M \alpha_m\mathcal{N}_{\ell_m,A_m,b_m}\in  \mathfrak N(\sigma)$ with suitable $\alpha_m\in\mathbb R, \ell_m\in V',A_m\in\mathcal{L}(V)$ and $b_m\in V$ such that
\begin{equation*}\label{approx:prop}
    \norm{\sum_{m=1}^M \alpha_m\mathcal{N}_{\ell_m,A_m,b_m}\circ H - f}_{L^p(\mu)}^p=
    \int_\X \abs{
    \sum_{m=1}^M \alpha_m\mathcal{N}_{\ell_m,A_m,b_m}(H(x)) - f(x) 
    }^p\mu(dx) <\varepsilon^p.
\end{equation*}  
\end{proposition}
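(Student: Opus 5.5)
The plan is to transport the problem to the Hilbert space $V=\ell^2(\N)$ through the induced map $H$, apply Theorem \ref{thm: main} there, and pull the approximation back to $\X$ by a change of variables.

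\emph{Step 1 (setting on $V$).} By the preliminary observations above, the pushforward $\nu:=\mu\circ H^{-1}$ is a Radon probability measure on $\mathcal B(V)$. Moreover, $\tilde f:=f\circ H^{-1}$, extended by $0$ outside $H(\X)$, is a Borel function on $V$ with $\tilde f\in L^p(\nu)$ and $\norm{\tilde f}_{L^p(\nu)}=\norm{f}_{L^p(\mu)}$. To make sense of $H^{-1}$ and of its measurability, we use that $H(\X)\in\mathcal B(V)$ and that $H:\X\to H(\X)$ is a measurable isomorphism. This follows from property (5) with $E=\X$, which is legitimate because $\X$ is $\sigma$-compact.

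\emph{Step 2 (approximation on $V$).} The space $V$ is a real separable Hilbert space, hence a Fr\'echet space. The activation $\sigma:V\to V$ has bounded range, and in a normed space this is the same as von Neumann-boundedness. It also satisfies the separating property. Theorem \ref{thm: main} therefore applies to $(V,\nu)$ and yields $\alpha_m\in\R$, $\ell_m\in V'$, $A_m\in\mathcal L(V)$ and $b_m\in V$ such that
\[
\int_V \abs{\sum_{m=1}^M\alpha_m\mathcal N_{\ell_m,A_m,b_m}(v)-\tilde f(v)}^p\,\nu(dv)<\varepsilon^p .
\]

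\emph{Step 3 (pull back).} Set $G:=\sum_m\alpha_m\mathcal N_{\ell_m,A_m,b_m}-\tilde f$. Each neuron is bounded, but it is not necessarily continuous, since $\sigma$ is only assumed to satisfy the separating property. We therefore need each $\mathcal N_{\ell,A,b}$ to be Borel measurable on $V$. I would take this as implicit in the paper's standing convention, already used in Theorem \ref{thm: main}, that $\mathfrak N(\sigma)\subset L^p$; if necessary, one assumes $\sigma$ Borel. The change of variables formula $\int_V |G|^p\,d(\mu\circ H^{-1})=\int_\X |G\circ H|^p\,d\mu$ then holds for the Borel function $|G|^p$. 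On $\X$ we have $\tilde f\circ H=f$, because $H^{-1}\circ H=\mathrm{id}_\X$. Hence
\[
\int_\X\abs{\sum_m\alpha_m\mathcal N_{\ell_m,A_m,b_m}(H(x))-f(x)}^p\mu(dx)<\varepsilon^p,
\]
which is the claim.

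\emph{Main obstacle.} The only delicate point is measurability. Specifically, we need $\tilde f$ to be a genuine Borel function on $V$, which is where $\sigma$-compactness and property (5) enter, and we need the composed neurons $\mathcal N\circ H$ to be $\mu$-measurable. Once these are in place, the analytic content is entirely contained in Theorem \ref{thm: main} and in the already established fact that $\nu$ is Radon.
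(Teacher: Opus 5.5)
Your proposal is correct and follows the paper's proof: you transport $f$ to $f\circ H^{-1}$ on $V$ (extended by zero), apply Theorem~\ref{thm: main} to the Radon measure $\mu\circ H^{-1}$, and pull the approximation back by a change of variables. You also make explicit two points the paper leaves tacit: property (5) is what makes $f\circ H^{-1}$ Borel, and $\sigma$ must be Borel for the neurons to lie in $L^p$.
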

\begin{proof}
    The proof follows straightforwardly from the discussion above: given $f\in L^p(\mu)$, we may consider $f\circ H^{-1}\in L^p(\mu\circ H^{-1})$ (trivially extended outside $H(\X)$). Since $\mu\circ H^{-1}$ is Radon on $V$, we can apply Theorem \ref{thm: main}, and find a neural network on $V$
    \[
    \mathcal{NN}=\sum_{m=1}^M \alpha_m\mathcal{N}_{\ell_m,A_m,b_m}\in  \mathfrak N(\sigma)    
    \]
 with suitable $\alpha_m\in\mathbb R, \ell_m\in V',A_m\in\mathcal{L}(V)$ and $b_m\in V$ such that
 \[
 \int_V \abs{f\circ H^{-1}(v) - \mathcal{NN}(v)}^p\mu\circ H^{-1}(dv) =  \int_{H(\X)} \abs{f\circ H^{-1}(v) - \mathcal{NN}(v)}^p\mu\circ H^{-1}(dv)<\varepsilon^p.
 \]
Hence, by a change of variable once more
\[
\int_\X\abs{f(x)-\mathcal{NN}\circ H(x)}^p\mu(dx) = 
\int_{H(\X)} \abs{f\circ H^{-1}(v) - \mathcal{NN}(v)}^p\mu\circ H^{-1}(dv)<\varepsilon^p.
\]
   
\end{proof}

The next result is an application of the results in Section \ref{sec: Finite-dimensional approximation}:

\begin{proposition}\label{prop: qp approx finite-dim}
    Assume the same setting of Proposition \ref{prop: qp approx} and in addition that the activation function $\sigma:V\to V$ is continuous. Fix $f\in L^p(\mu)$. Then for any $\varepsilon>0$ there exists  $ \sum_{m=1}^M \alpha_m\mathcal{N}_{\ell_m,A_m,b_m}\in  \mathfrak N(\sigma)$ with suitable $\alpha_m\in\mathbb R, \ell_m\in V',A_m\in\mathcal{L}(V)$ and $b_m\in V$, and an integer $N_\varepsilon$ such that for all integers $N\ge N_\varepsilon$ it holds
\begin{equation*}\label{approx:prop}
    \int_\X \abs{
    \sum_{m=1}^M \alpha_m \langle
    \ell_m\circ\Pi_N,\sigma(\Pi_N A_m\Pi_NH(x) +\Pi_Nb_m)\rangle - f(x) 
    }^p\mu(dx) <\varepsilon^p.
    \end{equation*}
where $\{\Pi_N\}_N$ is a sequence of orthogonal projections with respect to an orthonormal basis $(e_k)_{k\in\N}$ of $V$.  
\end{proposition}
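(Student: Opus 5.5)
The plan is to combine Proposition \ref{prop: qp approx} with Proposition \ref{prop: finite dimensional approx, Banach}, applied on $V=\ell^2(\N)$ with the pushforward measure $\nu:=\mu\circ H^{-1}$, and then pull back to $\X$ by the change of variables used above. Fix $f\in L^p(\mu)$ and $\varepsilon>0$, and let $F:=f\circ H^{-1}$, extended by zero outside $H(\X)$. As shown before Example \ref{ex: reflexive separable Banach}, $F\in L^p(\nu)$, $\nu$ is a Radon probability measure on $V$, and $\norm{F}_{L^p(\nu)}=\norm{f}_{L^p(\mu)}$.

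\textbf{Step 1.} Apply Theorem \ref{thm: main} (equivalently, the proof of Proposition \ref{prop: qp approx}) on the Fréchet space $V$ with the error $\varepsilon/2$. This gives $\mathcal{NN}=\sum_{m=1}^M\alpha_m\mathcal N_{\ell_m,A_m,b_m}\in\mathfrak N(\sigma)$ with $\norm{F-\mathcal{NN}}_{L^p(\nu)}<\varepsilon/2$. The scalars $\alpha_m$ can be absorbed into the functionals by setting $\tilde\ell_m=\alpha_m\ell_m$, since $\alpha_m\langle\ell_m\circ\Pi_N,\cdot\rangle=\langle\tilde\ell_m\circ\Pi_N,\cdot\rangle$. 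The network then has exactly the form required in Proposition \ref{prop: finite dimensional approx, Banach}.

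\textbf{Step 2.} Any orthonormal basis $(e_k)_{k\in\N}$ of the separable Hilbert space $V$ is a normalized Schauder basis. Its coordinate projections $\Pi_N$ are the orthogonal projections onto $\Span\{e_1,\dots,e_N\}$, with $\norm{\Pi_N}_{op}=1$. By hypothesis $\sigma$ is continuous and has bounded range, so it is bounded. Proposition \ref{prop: finite dimensional approx, Banach} therefore applies with $\delta=\varepsilon/2$ and the probability measure $\nu$. It yields $N_\varepsilon$ such that, for every $N\ge N_\varepsilon$,
\[
\norm{F-\sum_{m=1}^M\alpha_m\langle\ell_m\circ\Pi_N,\sigma(\Pi_NA_m\Pi_N\cdot+\Pi_Nb_m)\rangle}_{L^p(\nu)}<\varepsilon.
\]

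\textbf{Step 3.} Call the finite-dimensional network in Step 2 $\mathcal{NN}_N$. It is a bounded Borel function on $V$, so the change of variables $\int_V\abs{\Phi}^p\,d\nu=\int_\X\abs{\Phi\circ H}^p\,d\mu$ applies to $\Phi=F-\mathcal{NN}_N$. Since $F\circ H=f$ on $\X$, the $L^p(\mu)$-norm of $f-\mathcal{NN}_N\circ H$ equals the $L^p(\nu)$-norm of $F-\mathcal{NN}_N$, which is less than $\varepsilon$. This is exactly the claimed inequality.

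The only point needing care is the measurability of the integrand on $\X$ in Step 3. Here $\mathcal{NN}_N$ is continuous on $V$ and $H$ is continuous, so their composition is Borel on $\X$, and $F\circ H=f$ holds exactly because $H$ is injective. Everything else is a direct chaining of earlier results, so I expect no real obstacle.
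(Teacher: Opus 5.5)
Your proposal is correct and follows the paper's route. The paper's proof is the one-line remark that the statement follows by combining Proposition~\ref{prop: qp approx} (Theorem~\ref{thm: main} applied on $V$ to the Radon pushforward $\mu\circ H^{-1}$) with Proposition~\ref{prop: finite dimensional approx, Banach} for an orthonormal basis of $V$, then pulling back through $H$; you simply spell out the details it leaves implicit (the $\varepsilon/2$ split, absorbing the $\alpha_m$ into the $\ell_m$, treating an orthonormal basis as a normalized Schauder basis, and the change of variables using the Borel measurability of $f\circ H^{-1}$ and the injectivity of $H$).
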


\begin{proof}
    The result follows by combining Propositions \ref{prop: qp approx} and \ref{prop: finite dimensional approx, Banach}.
\end{proof}

\begin{remark}
We observe that, by choosing the canonical orthonormal basis of $V$, namely $$e_k:=(0,0,\dots, 0,1,0,\dots),\quad k\in \N$$ with entry equal to 1 at the $k$-th slot, then the input $H(x)\in V$ becomes
\[
\Pi_N H(x) = h_1(x)e_1 + \dots + h_N(x)e_N = (h_1(x),\dots,h_N(x),0,0,\dots). 
\]
Therefore, we see that specifying the whole separating sequence $(h_i)_{i=1}^\infty$ is not even necessary, but that is enough to stop at $N\in\N$ sufficiently large. In light of this, let us revisit Example \ref{ex: reflexive separable Banach}: we are given a reflexive separable Banach space $(\X,\norm{\cdot})$ and we want to approximate functions $f\in L^p(\mu)$ with $1\le p<\infty$ and $\mu$ a Radon measure on $\X$. Certainly one can accomplish this by means of Theorem \ref{thm: main}, even though the ensuing neural networks are specified apriori by an infinite number of parameters. This issue could be circumvented by means of Proposition \ref{prop: finite dimensional approx, Banach 2}, provided that the space $\X$ grants the existence of a Schauder basis, a property that is not always true. However, if we endow $\X$ with its weak topology $\tau(\X,\X')$, we have seen that it becomes a quasi-Polish space which, as a measurable space, is equivalent to the original $(\X,\norm{\cdot})$. Therefore, our original problem of approximating functions $f\in L^p(\mu)$ via neural networks specified by a finite number of parameters can now be addressed by means of 
Proposition \ref{prop: qp approx finite-dim}.
    
\end{remark}

\section{Approximation results for more general codomain}\label{sec: general codomain}

In this section we are going to show that our results can be extended to vector valued function $f=f(x)\in E$, where $(E,\norm{\cdot}_E)$ is a Banach space endowed with a normalised Schauder basis $(s_k)_{k\in\N}$ (and hence separable). As recalled at the beginning of Section \ref{sec: Finite-dimensional approximation}, $\beta^E_k,\,k\in\N$ will denote here the canonical linear projectors associated to the basis, while $\Pi_N^E,\,N\in\N$ will denote the associated projection operators. 

Recall, that given an arbitrary finite measure space $(\Omega,\mathcal{A},\mu)$ and $1\le p <\infty$, the $p$-Bochner space $L^p(\mu;E)$ is defined as
\[
L^p(\mu;E):=\left\{f:\Omega\to E; \, f \text{ is } \mathcal A / \mathcal{B}(E) \text{ measurable and } \int_{\Omega} \norm{f(\omega)}_E^p\mu(d\omega)<\infty\right\},
\]
where as usual we identify functions that agree $\mu$-a.e. It is a Banach space with respect to the norm
\[
\norm{f}_{L^p(\mu;E)}:= \left[\int_{\Omega} \norm{f(\omega)}_E^p\mu(d\omega)\right]^{1/p},\quad f\in L^p(\mu;E).
\]

We start with an easy lemma

\begin{lemma}\label{lemma: Lp convergence of projections}
    Let $1\le p <\infty$ and $(\Omega,\mathcal{A},\mu)$ be an arbitrary finite measure space. Let $f\in L^p(\mu;E)$. Define
    \[
    (\Pi_N^Ef)(\omega):= \Pi_N^Ef(\omega),\quad \omega\in\Omega.
    \]    
    Then $\Pi_N^Ef\in L^p(\mu;E)$ and 
    \[
    \Pi_N^Ef\to f \quad \text{in } L^p(\mu;E)\text{ as }N\to\infty.
    \]    
\end{lemma}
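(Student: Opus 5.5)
The plan is to combine measurability, a uniform pointwise bound, and the Dominated Convergence Theorem, in the same way as in the proof of Proposition \ref{prop: finite dimensional approx, Banach}.

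\emph{Measurability and membership.} Since $\Pi_N^E:E\to E$ is linear and bounded, it is continuous, hence Borel measurable. Therefore $\Pi_N^E\circ f$ is $\mathcal A/\mathcal B(E)$ measurable as a composition of measurable maps. Set $C_E:=\sup_N\norm{\Pi_N^E}_{op}<\infty$, which is finite by the uniform boundedness of the basis projections recalled at the beginning of Section \ref{sec: Finite-dimensional approximation} (Schaefer). Then
\[
\norm{\Pi_N^Ef(\omega)}_E\le C_E\norm{f(\omega)}_E .
\]
Integrating the $p$-th power gives $\norm{\Pi_N^Ef}_{L^p(\mu;E)}\le C_E\norm{f}_{L^p(\mu;E)}<\infty$, so $\Pi_N^Ef\in L^p(\mu;E)$.

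\emph{Pointwise convergence and domination.} For every $\omega$ we have $\Pi_N^Ef(\omega)\to f(\omega)$ in $E$, because $\Pi_N^Ey\to y$ for every $y\in E$ (the Schauder basis expansion). Hence the scalar functions $\omega\mapsto\norm{\Pi_N^Ef(\omega)-f(\omega)}_E^p$ converge to $0$ pointwise. They are measurable, since the norm is continuous and $\Pi_N^E f - f$ is measurable. They are also dominated by
\[
(C_E+1)^p\norm{f(\omega)}_E^p,
\]
which is integrable because $f\in L^p(\mu;E)$. The Dominated Convergence Theorem then yields $\int_\Omega\norm{\Pi_N^Ef-f}_E^p\,d\mu\to0$, which is the claimed convergence in $L^p(\mu;E)$.

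\emph{Main obstacle.} The argument is routine; the only delicate point is the measurability of $\Pi_N^E f$ and of $\norm{\Pi_N^Ef-f}_E$. Continuity of $\Pi_N^E$ handles the first. For the second, note that $E$ is separable, so $\mathcal B(E\times E)=\mathcal B(E)\otimes\mathcal B(E)$. It follows that $\omega\mapsto(\Pi_N^Ef(\omega),f(\omega))$ is measurable into $E\times E$, and composing with the continuous map $(y,z)\mapsto\norm{y-z}_E$ gives measurability. If the paper's definition of $L^p(\mu;E)$ required strong measurability instead, separability of $E$ (Pettis) makes it equivalent to Borel measurability, so nothing changes.
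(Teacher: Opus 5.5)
Your proposal is correct and follows essentially the same route as the paper: Borel measurability from continuity of $\Pi_N^E$, the uniform bound $\sup_N\norm{\Pi_N^E}_{op}<\infty$, pointwise convergence from the basis expansion, and dominated convergence. Your dominating function $(C_E+1)^p\norm{f(\omega)}_E^p$ is the right one; the paper writes $C$ where $C^p$ is needed. Measurability of $\Pi_N^Ef-f$ also follows at once from $\Pi_N^Ef-f=(\Pi_N^E-\mathrm{Id})\circ f$, so the product $\sigma$-algebra argument is not needed.
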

\begin{proof}
Given $f\in L^p(\mu;E)$, since the projection operators $\Pi^E_N$ are by assumption linear and continuous maps, it follows that $\Pi^E_Nf=\Pi_N^E\circ f$ is $\mathcal{A}/\mathcal{B(E)}$ measurable. Besides,
\[
\int_\Omega \norm{(\Pi^E_Nf)(\omega)}^p_E\mu(d\omega) \le C \int_\Omega \norm{ f(\omega)}^p_E\mu(d\omega),
\]
where $C:= \sup_N{\norm{\Pi^E_N}}_{op}<\infty$, namely $\Pi_N^Ef\in L^p(\mu;E)$. Since pointwise it holds good that $(\Pi^E_Nf)\to f$ as $N\to\infty$, we have
\[
\norm{(\Pi^E_Nf)(\omega)-f(\omega)}_E^p \to 0
\]
and
\[
\norm{(\Pi^E_Nf)(\omega)-f(\omega)}_E^p \le 2^{p-1}\left( C \norm{f(\omega)}^p_E + \norm{f(\omega)}^p_E\right) \in L^1(\mu).
\]
By the Dominated Convergence Theorem we obtain
\[
\int_\Omega \norm{(\Pi^E_Nf)(\omega)-f(\omega)}_E^p\mu(d\omega)\to 0\quad\text{as } N\to\infty.
\]
    
\end{proof}

After this preparation, we are ready to state and prove the main result of this section: 

\begin{proposition}\label{prop: vectorial case}
    Let $\X$ be a real Fr\'echet space, and let $\sigma:\X\to\X$ be von Neumann-bounded and satisfying the separating property. Let $(E,\norm{\cdot}_E)$ be a Banach space endowed with a normalised Schauder basis $(s_k)_{k\in\N}$.  Let $\mu$ be a Radon probability measure on $\X$, and let $1\le p <\infty$. Let $f\in L^p(\mu;E)$. Then for any $\varepsilon>0$ there exist $N_\varepsilon\in\N$ and $\mathcal{N}_1,\dots,\mathcal{N}_{N_\varepsilon}\in\mathfrak{N}(\sigma)$ such that, by setting
    \[
    \mathcal{N}(x):=\sum_{n=1}^{N_\varepsilon}\mathcal{N}_n(x)s_n,\quad x\in\X
    \]
    we have
    \[
    \norm{\mathcal{N}-f}_{L^p(\mu;E)}<\varepsilon.
    \]    
\end{proposition}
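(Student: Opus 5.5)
The plan is a two-step approximation: first truncate $f$ in the Schauder basis of $E$, then approximate each scalar coordinate by a scalar network using Theorem \ref{thm: main}.

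\emph{Step 1 (truncation).} Fix $\varepsilon>0$. By Lemma \ref{lemma: Lp convergence of projections}, applied with $\Omega=\X$, $\mathcal A=\mathcal B(\X)$ and the given $\mu$, we have $\Pi^E_N f\to f$ in $L^p(\mu;E)$. Choose $N_\varepsilon$ so that
\[
\norm{\Pi^E_{N_\varepsilon}f-f}_{L^p(\mu;E)}<\varepsilon/2 .
\]
By definition,
\[
\Pi^E_{N_\varepsilon}f(x)=\sum_{n=1}^{N_\varepsilon}\langle\beta^E_n,f(x)\rangle s_n .
\]

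\emph{Step 2 (scalar coefficients).} Each coefficient $f_n:=\beta^E_n\circ f$ is Borel measurable, since $\beta^E_n$ is linear and continuous on $E$. It also satisfies $\abs{f_n(x)}\le\norm{\beta^E_n}_\ast\norm{f(x)}_E$, so $f_n\in L^p(\mu)$. Recall that $\norm{\beta^E_n}_\ast\le 2C$ with $C=\sup_N\norm{\Pi^E_N}_{op}$, although finiteness of the norm is all we need here.

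\emph{Step 3 (scalar approximation).} Apply Theorem \ref{thm: main} to each $f_n$, $n=1,\dots,N_\varepsilon$. This gives $\mathcal N_n\in\mathfrak N(\sigma)$ with
\[
\norm{\mathcal N_n-f_n}_{L^p(\mu)}<\frac{\varepsilon}{2N_\varepsilon}.
\]
Setting $\mathcal N(x)=\sum_{n=1}^{N_\varepsilon}\mathcal N_n(x)s_n$, this map is Borel measurable and bounded as an $E$-valued function, so $\mathcal N\in L^p(\mu;E)$.

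\emph{Step 4 (combination).} Since $\norm{s_n}_E=1$, Minkowski's inequality in $L^p(\mu;E)$ gives
\[
\norm{\mathcal N-\Pi^E_{N_\varepsilon}f}_{L^p(\mu;E)}\le\sum_{n=1}^{N_\varepsilon}\norm{(\mathcal N_n-f_n)s_n}_{L^p(\mu;E)}=\sum_{n=1}^{N_\varepsilon}\norm{\mathcal N_n-f_n}_{L^p(\mu)}<\varepsilon/2 .
\]
Combining this with Step 1 by the triangle inequality yields $\norm{\mathcal N-f}_{L^p(\mu;E)}<\varepsilon$.

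\emph{Main obstacle.} There is no serious one. The only points needing care are the order of quantifiers and measurability. The truncation index $N_\varepsilon$ must be fixed before the scalar tolerances $\varepsilon/(2N_\varepsilon)$ are chosen. Measurability of $f_n$ and of $\mathcal N$ is what makes the scalar $L^p$ theorem applicable and the Bochner norm meaningful. Both hold because $\beta^E_n$ is continuous and $\mathcal N$ is a finite sum of measurable scalar functions times fixed vectors.
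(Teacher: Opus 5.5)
Your proposal is correct and follows the paper's proof essentially step for step. You truncate with Lemma~\ref{lemma: Lp convergence of projections} to error $\varepsilon/2$, approximate each coefficient $\langle\beta^E_n,f\rangle$ via Theorem~\ref{thm: main} to within $\varepsilon/(2N_\varepsilon)$, and combine by Minkowski's inequality using $\norm{s_n}_E=1$; your bound $\abs{f_n}^p\le\norm{\beta^E_n}_\ast^p\norm{f}_E^p$ is even slightly more careful than the paper's, which drops the $p$-th power on $\norm{\beta^E_n}_\ast$.
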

\begin{proof}
    Given $f\in L^p(\mu;E)$ and $\varepsilon>0$, we know that from Lemma \ref{lemma: Lp convergence of projections} there exists $N_\varepsilon\in\N$ such that for all integers $N\ge N_\varepsilon$ it holds
    \[
    \norm{\Pi_N^Ef-f}_{L^p(\mu;E)} < \varepsilon/2.
    \]
    By construction,
    \[
    (\Pi^E_{N_\varepsilon}f)(x) = \sum_{n=1}^{N_\varepsilon}\langle\beta^E_n ,f(x)\rangle s_n.
    \]
    Since the linear projectors are continuous, it follows that $\X\ni x\mapsto \langle\beta^E_n ,f(x)\rangle\in\R $ is $\mathcal{B}(\X)/\mathcal{B}(\R)$ measurable, and
    \[
    \int_\X\abs{\langle\beta^E_n ,f(x)\rangle}^p\mu(dx) \le \norm{\beta^E_n}_{\ast} \int_\X\norm{f(x)}_E^p\mu(dx) <\infty,
    \]
    namely, $\langle \beta^E_n,f\rangle\in L^p(\mu),n\in\N$. 

    In virtue of Theorem \ref{thm: main}, there exist  $\mathcal{N}_1,\dots,\mathcal{N}_{N_\varepsilon}\in\mathfrak{N}(\sigma)$ such that
    \[
    \norm{\mathcal N_n-\langle\beta^E_n,f\rangle}_{L^p(\mu)} < \varepsilon/2N_\varepsilon,\quad n=1,\dots, N_\varepsilon.
    \]
    Let us set $\mathcal{N}(x):=\sum_{n=1}^{N_\varepsilon}\mathcal{N}_n(x)s_n, x\in\X$: evidently, for any $n=1,\dots, N_\varepsilon$, $\mathcal N_ns_n$ is $\mathcal{B(\X)}/\mathcal{B}(E)$ measurable, and 
    we have
    \[
    \int_\X\norm{\mathcal{N}_n(x)s_n}_E^p\mu(dx) = \int_\X\abs{\mathcal{N}_n(x)}_E^p\mu(dx) <\infty, 
    \]
    i.e. $\mathcal{N}_ns_n\in L^p(\mu;E)$ and thus $\mathcal{N}\in L^p(\mu;E)$ as well. Therefore, we conclude
    \[
    \begin{split}
        \norm{\mathcal{N}-f}_{L^p(\mu;E)} & \le \norm{\Pi^E_{N_\varepsilon}f-f}_{L^p(\mu;E)} + \norm{\Pi^E_{N_\varepsilon}f-\mathcal N}_{L^p(\mu;E)} \\
        &< \varepsilon/2 + \norm{\sum_{n=1}^{N_\varepsilon}\left[\mathcal{N}_n - \langle \beta^E_n,f\rangle  \right] s_n}_{L^p(\mu;E)}\\
        &\le \varepsilon/2 +\sum_{n=1}^{N_\varepsilon}\norm{\mathcal{N}_n - \langle \beta^E_n,f\rangle }_{L^p(\mu)}\\
        &< \varepsilon/2+\varepsilon/2 = \varepsilon.
    \end{split} 
    \]

\end{proof}

\begin{remark}
    If we further assume that $(\X,\norm{\cdot})$ is now a Banach space with normalized Schauder basis $(e_k)_{k\in\N}$, in light of this last result and Proposition \ref{prop: finite dimensional approx, Banach 2} it is evident how to obtain a neural architecture with values in $E$ specified by a finite number of parameters. If we write
    \[
    \mathcal{N}_n(x)=\sum_{j=1}^{J_n}\langle \ell_j^{(n)},\sigma(A^{(n)}_jx + b^{(n)}_j )\rangle,\quad n=1,\dots,N_\varepsilon
    \]
    for suitable $\ell^{(n)}_j\in\X',A^{(n)}_j\in\mathcal{L}(\X),b^{(n)}_j\in\X$, then my means of Proposition \ref{prop: finite dimensional approx, Banach 2} we can find for any $\delta>0$ an integer $N^\ast$ such that
    \[
    \mathcal{N}^\ast(x):=\sum_{j=1}^{N_\varepsilon}\mathcal{N}_n^\ast(x)s_n
    \]
    with
    \[
    \mathcal{N}_n^\ast(x)=\sum_{j=1}^{J_n}\langle \ell_j^{(n)}\circ \Pi_{N^\ast},\sigma(\Pi_{N^\ast} A^{(n)}_j\Pi_{N^\ast}x + b^{(n)}_j )\rangle,\quad n=1,\dots,N_\varepsilon
    \]
    satisfies 
    \[
    \norm{\mathcal{N}^\ast-f}_{L^p(\mu;E)}<\varepsilon+\delta.
    \]
    ($\Pi_N,N\in\N$ here are the projections associated to the Schauder basis $(e_k)_{k\in\N}$ of $\X$.)
\end{remark}

\section{An example: Denoising Diffusion Operators}\label{sec: diffusion}

Diffusion models have recently emerged as a powerful paradigm for generative modeling. They consist of two elements: a forward process that corrupts input data with Gaussian white noise, and a reverse process that learns a score function and which generates by denoising new samples resembling the original data. In the last years, they have enjoyed a spectacular success; however, they are mostly formulated on finite dimensional spaces, a fact that prevents them from being used in applications where the data has a functional form, as in geometric data analysis or scientific computing (e.g. numerical solutions of PDEs). Quite recently, some authors have started to consider diffusion models in infinite dimensional spaces, i.e. where the data, of which we want to generate new samples, lives e.g. in some function space: see \cite{pidstrigach2024infinite, lim2025score}. 

In broad strokes, we are given a separable Hilbert space $(H,\langle \cdot,\cdot\rangle)$  and a Borel probability measure $\mu$ on $H$, which is called the data measure. Let $u$ be the data, namely a $H$-valued random variable that distributes as $\mu$. The corruption process is carried out in this way: let $\mu_0$ be a Gaussian distribution on $H$ with 0 mean vector and covariance operator given by $C:H\to H$ linear, self-adjoint, non-negative and trace-class. Then we define
\[
v = u+ \eta
\]
where $\eta$ distributes as $\mu_0$ and it is independent of $\mu$. Let us call $\nu$ the distribution of $v$. By choosing as a reference measure the Gaussian $\mu_0$, under certain assumptions (refer to \cite{lim2025score} for details), one can prove that the Radon-Nikodym derivative of $\nu$ with respect to $\mu_0$ exists and it takes the form 
\[
\frac{d\nu}{d\mu_0}(w) = \exp(\Phi(w)),\quad w\in H
\]
for some Borel function $\Phi:H\to\R$. By setting $H_{\mu_0}:=C^{1/2}(H)$ and assuming that $\Phi$ is Fr\'echet differentiable along $H_{\mu_0}$, we can introduce the score of $\nu$ with respect to $\mu_0$ as
\[
D_{H_{\mu_0}} \Phi : H\to H_{\mu_0}',\quad D_{H_{\mu_0}} \Phi \equiv  D_{H_{\mu_0}}\log\frac{d\nu}{d\mu_0}.
\]
In order to carry out the denoising part, we need to deal with the score matching problem, which takes this form: given a class of neural networks architectures $G_\theta:H\to H_{\mu_0}'$, parametrized by $\theta\in\R^p$ for some $p$, the goal is  
\[
\min_{\theta\in\R^p} \mathfrak L(\theta)
\]
where the loss function is
\[
\mathfrak L(\theta):=\int_H\norm{G_\theta(x) - D_{H_{\mu_0}} \Phi(x) }^2_{H_{\mu_0}'}\nu(dx).
\]
We therefore see that the score matching problem fits into our $L^p$ framework, because one is required to approximate the unknown score $D_{H_{\mu_0}} \Phi : H\to H_{\mu_0}'$, a map between infinite dimensional space, in the $L^2$ sense. We also remark that, as in general $\operatorname{supp} \nu$ is not compact, the ``uniform-norm'' framework offered by \cite{benth2023neural} would not work in the present setting, and hence the necessity of adopting a global perspective and of using the results presented in this work.

\section{Appendix}

In this section, we briefly recall some results we have used here and there in the proofs above.

First, we recall the concept of Fourier transform (or characteristic functional) of a measure: refer to e.g. \cite[Definition 7.13.2]{bogachev2007measure}.
\begin{definition}\label{def: Fourier}
    Let $\X$ be locally convex space and let $\mu$ be a probability measure defined on $\Sigma(\X')$, namely, the $\sigma$-algebra generated by all continuous and linear functionals on $\X$. The Fourier transform of $\mu$ is the function $\hat{\mu}:\X\to\C$ so defined
    \begin{equation}
        \hat{\mu}(\phi) = \int_\R e^{it}\mu\circ \phi^{-1}(dt),\quad \phi\in \X'.
    \end{equation}
\end{definition}
The Fourier transform allows to distinguish different measures, namely it holds \cite[Lemma 7.13.5]{bogachev2007measure}:
\begin{lemma}\label{lemma: Fourier}
    If $\mu_1$ and $\mu_2$ are probability measures on $\Sigma(\X')$ and $\hat{\mu_1}=\hat{\mu_2}$, then one has $\mu_1=\mu_2$ on $\Sigma(\X')$. If both $\mu_1$ and $\mu_2$ are Radon on $\mathcal{B}(\X)$, then the equality holds on $\mathcal{B}(\X)$ too.
\end{lemma}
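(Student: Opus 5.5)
The statement to prove is Lemma~\ref{lemma: Fourier}: equality of Fourier transforms forces equality of the measures on $\Sigma(\X')$, and on $\mathcal B(\X)$ when both measures are Radon. I will reduce to finite dimensions, where the classical uniqueness theorem for characteristic functions is available, and then extend by a $\pi$--$\lambda$ argument followed by Radon regularity.

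The first step is to use the cylinder sets. Fix $\phi_1,\dots,\phi_n\in\X'$ and set $T=(\phi_1,\dots,\phi_n):\X\to\R^n$. For $t\in\R^n$ the functional $\sum_k t_k\phi_k$ lies in $\X'$, so the hypothesis $\hat\mu_1=\hat\mu_2$, applied to every element of $\X'$, gives
\[
\int_{\R^n} e^{i\,t\cdot y}\,\mu_1\circ T^{-1}(dy)=\int_{\R^n} e^{i\,t\cdot y}\,\mu_2\circ T^{-1}(dy)
\]
for all $t\in\R^n$. Both sides are determined by the one-dimensional images of $\mu_j$ under $\sum_k t_k\phi_k$, since Definition~\ref{def: Fourier} evaluates the transform for all $\phi$. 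By the uniqueness theorem for characteristic functions on $\R^n$, $\mu_1\circ T^{-1}=\mu_2\circ T^{-1}$ on $\mathcal B(\R^n)$.

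Next, the cylinder sets $T^{-1}(B)$, with $B\in\mathcal B(\R^n)$ and $n$ and $\phi_i$ arbitrary, form a $\pi$-system: the intersection of two cylinder sets is again a cylinder set obtained by concatenating the functionals. This $\pi$-system generates $\Sigma(\X')$. The class of sets on which $\mu_1$ and $\mu_2$ agree is a $\lambda$-system, so Dynkin's theorem gives $\mu_1=\mu_2$ on $\Sigma(\X')$.

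For the Radon part, I will show that $\mu_1(K)=\mu_2(K)$ for every compact $K$; inner regularity then gives equality on all of $\mathcal B(\X)$. Fix a compact $K$ and a point $x\notin K$. By Hahn--Banach, for each $y\in K$ there is $\phi_y\in\X'$ with $\phi_y(y)\neq\phi_y(x)$. By compactness, finitely many open sets of the form $\{\abs{\phi_{y}-\phi_y(y)}<\delta_y\}$ cover $K$, with $\delta_y$ chosen so that $x$ lies outside each of them. Their union $U_x$ is an open set in $\Sigma(\X')$ with $K\subset U_x$ and $x\notin U_x$. This shows that $K$ is an intersection of sets in $\Sigma(\X')$, but possibly an uncountable one, so I cannot yet pass to the measures.

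This is the main obstacle, and I will handle it with Radon regularity. For each $j$ and each $\varepsilon>0$, choose a compact set $C\subset\X\setminus K$ with $\mu_1(\X\setminus K\setminus C)<\varepsilon$ and $\mu_2(\X\setminus K\setminus C)<\varepsilon$. Applying the separation step with the roles reversed, I separate the two disjoint compacts $K$ and $C$ by a finite collection of functionals. This yields a set $W\in\Sigma(\X')$ with $K\subset W$ and $W\cap C=\emptyset$. Then $\mu_j(W\setminus K)<\varepsilon$ for $j=1,2$. Since $\mu_1(W)=\mu_2(W)$ by the previous step, it follows that $\abs{\mu_1(K)-\mu_2(K)}<2\varepsilon$. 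Letting $\varepsilon\to0$ gives $\mu_1(K)=\mu_2(K)$, which concludes the proof.
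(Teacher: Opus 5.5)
Your proof is correct. It is the standard argument behind the result, which the paper does not prove itself but quotes from Bogachev (Lemma 7.13.5): uniqueness of characteristic functions for the finite-dimensional images, then Dynkin's theorem on cylinder sets, then inner regularity combined with separating disjoint compacts by finitely many functionals.

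The one step to tighten is the separation of $K$ from $C$. Knowing only $x\notin U_x$ does not let you cover $C$ by finitely many sets. There are two easy fixes:
\begin{itemize}
\item choose $\delta_y\le\tfrac12\abs{\phi_y(x)-\phi_y(y)}$, so that a whole neighbourhood of $x$ misses $U_x$; or
\item use compactness of $K\times C$ to get $T=(\phi_1,\dots,\phi_n)$ with $T(K)\cap T(C)=\emptyset$, and take $W=T^{-1}(T(K))\in\Sigma(\X')$.
\end{itemize}
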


The next lemma from \cite{benth2023neural} has been crucial for the proof of Theorem \ref{thm: main}: we recall it here.
\begin{lemma}\label{lemma: rotation of hyperplanes}
Let $\X$ be a real Fr\'echet space. Let $\psi\in \X'$ be not identically zero. Then, for arbitrary $\gamma\in \X'$, the equation
\begin{equation*}
    \gamma = \psi \circ A
\end{equation*}
is solvable for some $A\in\mathcal{L}(\X)$.
\end{lemma}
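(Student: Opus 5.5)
The statement to prove is Lemma~\ref{lemma: rotation of hyperplanes}: given $\psi\in\X'\setminus\{0\}$ and an arbitrary $\gamma\in\X'$, we must find $A\in\mathcal L(\X)$ with $\psi\circ A=\gamma$. The idea is to realise $A$ as a rank-one operator. Since $\psi\not\equiv 0$, choose $u\in\X$ with $\langle\psi,u\rangle=1$; this is possible by taking any $x_0$ with $\psi(x_0)\neq 0$ and rescaling, $u:=x_0/\psi(x_0)$. Then define
\[
A x := \langle \gamma, x\rangle\, u,\qquad x\in\X .
\]

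Next we check that $A$ has the required properties. It is clearly linear. It is continuous, because it is the composition of the continuous map $\gamma:\X\to\R$ with the continuous map $\R\ni t\mapsto tu\in\X$; scalar multiplication is continuous in a topological vector space. Hence $A\in\mathcal L(\X)$. Finally,
\[
\psi(Ax)=\langle\gamma,x\rangle\,\psi(u)=\langle\gamma,x\rangle,
\]
so $\psi\circ A=\gamma$.

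There is essentially no obstacle. The only point to note is that the argument uses nothing about $\X$ being Fr\'echet: it works in any topological vector space. In particular, no Hahn--Banach argument is needed here, because $\psi$ is given to us and is nonzero. The one step that deserves explicit mention is the continuity of the rank-one operator $A$, and this follows immediately from the continuity of $\gamma$ and of scalar multiplication. If $\gamma=0$ the construction gives $A=0$, which is still consistent.
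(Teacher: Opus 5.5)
Your proof is correct and complete. Choosing $u$ with $\langle\psi,u\rangle=1$ and setting $Ax=\langle\gamma,x\rangle u$ gives a continuous rank-one operator with $\psi\circ A=\gamma$, and, as you note, only the topological vector space structure is used.

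The paper gives no proof of its own here; it simply recalls the lemma from \cite{benth2023neural}, and your rank-one construction is the natural argument. Note also that in the proof of Theorem~\ref{thm: main} the set $A^{-1}(\Psi_+-b)$ denotes a preimage, so the fact that your $A$ is not invertible causes no difficulty there.
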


\vskip 1cm
\noindent {\bf Conflict of Interest:} The authors declare that they have no conflict of interest.

\bibliography{literature}

@misc{HJ,
      title={Steps Toward Deep Kernel Methods from Infinite Neural Networks}, 
      author={Tamir Hazan and Tommi Jaakola},
      year={2015},
      eprint={1508.05133},
      archivePrefix={arXiv},
      primaryClass={cs.LG}
}

@misc{GUss,
      title={On Universal Approximation by Neural Networks with Uniform Guarantees on Approximation of Infinite Dimensional Maps}, 
      author={William H. Guss and Ruslan Salakhutdinov},
      year={2019},
      eprint={1910.01545},
      archivePrefix={arXiv},
      primaryClass={cs.LG}
}

@article{benth2023neural,
  title={Neural networks in Fr{\'e}chet spaces},
  author={Benth, Fred Espen and Detering, Nils and Galimberti, Luca},
  journal={Annals of Mathematics and Artificial Intelligence},
  volume={91},
  number={1},
  pages={75--103},
  year={2023},
  publisher={Springer}
}

@ARTICLE{6795742,
  author={Mhaskar, H. N. and Hahm, Nahmwoo},
  journal={Neural Networks}, 
  title={Some new results on neural network approximation}, 
  year={1993},
  volume={6},
  number={8},
  pages={1069-1072}
  }

@ARTICLE{Williams,
  author={Williams, C. K. I.},
  journal={Advances in neural information processing systems,}, 
  title={Computing with infinite networks}, 
  year={1997},
  pages={295-301}
  }

@ARTICLE{ChoSaul,
  author={Cho, Youngmin and Saul, Lawrence K.},
  journal={Advances in neural information processing systems,}, 
  title={Kernel methods for deep learning}, 
  year={2009},
  pages={342-350}
  }

@ARTICLE{Hornik-93,
  author={Hornik, K.},
  journal={Neural Computation}, 
  title={Neural Networks for Functional Approximation and System Identification}, 
  year={1997},
  volume={9},
  number={1},
  pages={143-159},
  doi={10.1162/neco.1997.9.1.143}}

@book{Neal,
author = {Neal, Radford M.},
address = {New York},
title = {Bayesian {L}earning for {N}eural {N}etworks},
series = {Lecture {N}otes in {S}tatistics: 118},
publisher = {Springer Science+Business Media},
year = {1996}
}

@book{conway2010,
author = {Conway, John B.},
address = {New York},
edition = {2nd},
isbn = {9781441930927},
language = {eng},
publisher = {Springer Science+Business Media},
series = {Graduate {T}exts in {M}athematics; 96},
title = {A {C}ourse in {F}unctional {A}nalysis},
year = {2010}
}

@article{Cybenko1989,
	Author = {Cybenko, G. },
	Da = {1989/12/01},
	Doi = {10.1007/BF02551274},
	Id = {Cybenko1989},
	Isbn = {1435-568X},
	Journal = {Mathematics of Control, Signals and Systems},
	Number = {4},
	Pages = {303--314},
	Title = {Approximation by superpositions of a sigmoidal function},
	Ty = {JOUR},
	Url = {https://doi.org/10.1007/BF02551274},
	Volume = {2},
	Year = {1989}}

@article{galimberti2024neural,
  title={Neural networks in non-metric spaces},
  author={Galimberti, Luca},
  journal={arXiv preprint arXiv:2406.09310},
  year={2024}
}

@article{hornik1991approximation,
  title={Approximation capabilities of multilayer feedforward networks},
  author={Hornik, Kurt},
  journal={Neural networks},
  volume={4},
  number={2},
  pages={251--257},
  year={1991},
  publisher={Elsevier}
}

@article{lim2025score,
  title={Score-based diffusion models in function space},
  author={Lim, Jae Hyun and Kovachki, Nikola B and Baptista, Ricardo and Beckham, Christopher and Azizzadenesheli, Kamyar and Kossaifi, Jean and Voleti, Vikram and Song, Jiaming and Kreis, Karsten and Kautz, Jan and others},
  journal={Journal of Machine Learning Research},
  volume={26},
  number={158},
  pages={1--62},
  year={2025}
}

@article{pidstrigach2024infinite,
  title={Infinite-dimensional diffusion models},
  author={Pidstrigach, Jakiw and Marzouk, Youssef and Reich, Sebastian and Wang, Sven},
  journal={Journal of Machine Learning Research},
  volume={25},
  number={414},
  pages={1--52},
  year={2024}
}

@book{schaefer1971topological,
  title={Topological Vector Spaces},
  author={Schaefer, H.H.},
  isbn={9783540053804},
  lccn={lc75156262},
  series={Elements of mathematics / N. Bourbaki},
  url={https://books.google.com/books?id=AnWCAAAAIAAJ},
  year={1971},
  publisher={Springer}
}

@book{bogachev2007measure,
  title={Measure Theory},
  author={Bogachev, V.I.},
  number={v. 2},
  lccn={2006933997},
  series={Measure Theory},
  url={https://books.google.com/books?id=GbDazAEACAAJ},
  year={2007},
  publisher={Springer}
}

@article{li2020fourier,
  title={Fourier neural operator for parametric partial differential equations},
  author={Li, Zongyi and Kovachki, Nikola and Azizzadenesheli, Kamyar and Liu, Burigede and Bhattacharya, Kaushik and Stuart, Andrew and Anandkumar, Anima},
  journal={arXiv preprint arXiv:2010.08895},
  year={2020}
}

@article{bilokopytov2026universal,
  title={A universal approximation theorem and its applications to vector lattice theory},
  author={Bilokopytov, Eugene and Xanthos, Foivos},
  journal={Journal of Mathematical Analysis and Applications},
  pages={130632},
  year={2026},
  publisher={Elsevier}
}

@article{ismailov2026shallow,
  title={On shallow feedforward neural networks with inputs from a topological space: VE Ismailov},
  author={Ismailov, Vugar E},
  journal={Annals of Mathematics and Artificial Intelligence},
  pages={1--12},
  year={2026},
  publisher={Springer}
}

@misc{CuchieroSchmockerTeichmann,
  url = {https://arxiv.org/pdf/2306.03303.pdf},
  title={Global universal approximation of functional input maps on weighted spaces},
  author={Christa Cuchiero and Philipp Schmocker and Josef Teichmann},
  publisher={arXiv},
  year={2023}
}

@article{Hypertransformer,
  title={Designing universal causal deep learning models: The geometric (Hyper)transformer},
  author={Beatric Acciaio and Anastasis Kratsios and Gudmund Pammer},
  journal={Mathematical Finance},
  year={2024},  
  volume = {34},
  pages={671-735}
}

@article{wang2021learning,
  title={Learning the solution operator of parametric partial differential equations with physics-informed DeepONets},
  author={Wang, Sifan and Wang, Hanwen and Perdikaris, Paris},
  journal={Science advances},
  volume={7},
  number={40},
  pages={eabi8605},
  year={2021},
  publisher={American Association for the Advancement of Science}
}

@article{FUNAHASHI1989183,
title = {On the approximate realization of continuous mappings by neural networks},
journal = {Neural Networks},
volume = {2},
number = {3},
pages = {183-192},
year = {1989},
issn = {0893-6080},
doi = {https://doi.org/10.1016/0893-6080(89)90003-8},
url = {https://www.sciencedirect.com/science/article/pii/0893608089900038},
author = {Ken-Ichi Funahashi}
}

@article{HORNIK1989359,
title = {Multilayer feedforward networks are universal approximators},
journal = {Neural Networks},
volume = {2},
number = {5},
pages = {359-366},
year = {1989},
issn = {0893-6080},
doi = {https://doi.org/10.1016/0893-6080(89)90020-8},
url = {https://www.sciencedirect.com/science/article/pii/0893608089900208},
author = {Kurt Hornik and Maxwell Stinchcombe and Halbert White}
}

@article{LESHNO1993861,
title = {Multilayer feedforward networks with a nonpolynomial activation function can approximate any function},
journal = {Neural Networks},
volume = {6},
number = {6},
pages = {861-867},
year = {1993},
issn = {0893-6080},
doi = {https://doi.org/10.1016/S0893-6080(05)80131-5},
url = {https://www.sciencedirect.com/science/article/pii/S0893608005801315},
author = {Moshe Leshno and Vladimir Ya. Lin and Allan Pinkus and Shimon Schocken}
}

@ARTICLE{392253,
  author={Tianping Chen and Hong Chen},
  journal={IEEE Transactions on Neural Networks}, 
  title={Universal approximation to nonlinear operators by neural networks with arbitrary activation functions and its application to dynamical systems}, 
  year={1995},
  volume={6},
  number={4},
  pages={911-917},
  doi={10.1109/72.392253}}

@ARTICLE{286886,
  author={Chen, T. and Chen, H.},
  journal={IEEE Transactions on Neural Networks}, 
  title={Approximations of continuous functionals by neural networks with application to dynamic systems}, 
  year={1993},
  volume={4},
  number={6},
  pages={910-918},
  doi={10.1109/72.286886}}

@ARTICLE{76498,
  author={Sandberg, I.W.},
  journal={IEEE Transactions on Circuits and Systems}, 
  title={Approximation theorems for discrete-time systems}, 
  year={1991},
  volume={38},
  number={5},
  pages={564-566},
  doi={10.1109/31.76498}}

@article{10.1093/imatrm/tnac001,
    author = {Lanthaler, Samuel and Mishra, Siddhartha and Karniadakis, George E},
    title = "{Error estimates for DeepONets: a deep learning framework in infinite dimensions}",
    journal = {Transactions of Mathematics and Its Applications},
    volume = {6},
    number = {1},
    year = {2022},
    month = {03},
    issn = {2398-4945},
    doi = {10.1093/imatrm/tnac001},
    url = {https://doi.org/10.1093/imatrm/tnac001},
    note = {tnac001},
    eprint = {https://academic.oup.com/imatrm/article-pdf/6/1/tnac001/42785544/tnac001.pdf},
}

@article{LiDeepONets,
	author = {Lu, Lu and Jin, Pengzhan and Pang, Guofei and Zhang, Zhongqiang and Karniadakis, George Em},
	date = {2021/03/01},
	doi = {10.1038/s42256-021-00302-5},
	id = {Lu2021},
	isbn = {2522-5839},
	journal = {Nature Machine Intelligence},
	number = {3},
	pages = {218--229},
	title = {Learning nonlinear operators via DeepONet based on the universal approximation theorem of operators},
	url = {https://doi.org/10.1038/s42256-021-00302-5},
	volume = {3},
	year = {2021}}
\bibliographystyle{abbrv}

\end{document}